\documentclass[pdftex,12pt,reqno]{amsart}
\usepackage{amsmath,amssymb,amsthm,graphicx,color}
\usepackage{diagrams}
\usepackage{hyperref} 
\hypersetup{colorlinks=true} 
\bibliographystyle{abbrvnat} 
\newcommand{\source}{\mathtt{s}}

\newtheorem{thm}{Theorem}
\newtheorem{lemma}[thm]{Lemma}

\newtheorem{cor}[thm]{Corollary}

\theoremstyle{definition}
\newtheorem*{definition}{Definition}

\setlength{\marginparwidth}{1.2in}
\let\oldmarginpar\marginpar
\renewcommand\marginpar[1]{\-\oldmarginpar[\raggedleft\footnotesize #1]%
{\raggedright\footnotesize #1}}

\title[Local-to-Global Principles]{Local-to-Global Principles for Rotor Walk}
\author[Giacaglia, Levine, Propp, Zayas-Palmer]{Giuliano Pezzolo Giacaglia, 
Lionel Levine, \\ James Propp and Linda Zayas-Palmer}
\date{August 8, 2011}

\address{MIT Department of Mathematics, 77 Massachusetts Ave., Cambridge, MA 02139. {\tt giu} at {\tt mit} dot {\tt edu}}
\address{MIT Department of Mathematics, 77 Massachusetts Ave., Cambridge, MA 02139. \url{http://math.mit.edu/~levine}}
\address{Department of Mathematical Sciences, University of Massachusetts, Lowell. \url{http://faculty.uml.edu/jpropp/}}
\address{MIT Department of Mathematics, 77 Massachusetts Ave., Cambridge, MA 02139. {\tt lzayasp} at {\tt mit} dot {\tt edu}}

\subjclass[2010]{05C25, 05C38, 05C81, 90B10}

\keywords{cycle popping, hitting sequence, monoid action, rotor-router model, sandpile group, sandpile monoid}

\thanks{This work was supported by MIT's Undergraduate Research 
Opportunities Program, NSF grants 0644877 and 1001905, an NSF postdoctoral research fellowship, and NSF-REU funding provided through the courtesy 
of Prof.\ Richard Stanley.}

\newcommand{\Z}{\mathbb{Z}}
\newcommand{\N}{\mathbb{N}}

\newcommand{\tfaea}{{\rm (a)}}
\newcommand{\tfaeb}{{\rm (b)}}
\newcommand{\tfaec}{{\rm (c)}}

\newcommand{\old}[1]{}
\newcommand{\arxiv}[1]{{\tt \href{http://arxiv.org/abs/#1}{arXiv:#1}}}
\newcommand{\mailto}[1]{\href{mailto:#1}{\nolinkurl{#1}}}

\begin{document}

\begin{abstract}
\noindent
In rotor walk on a finite directed graph, 
the exits from each vertex follow a prescribed periodic sequence.
Here we consider the case of rotor walk 
where a particle starts from a designated source vertex 
and continues until it hits a designated target set, 
at which point the walk is restarted from the source.
We show that the sequence of successively hit targets, 
which is easily seen to be \emph{eventually} periodic, is in fact periodic.
We show moreover that reversing the periodic patterns of all rotor sequences 
causes the periodic pattern of the hitting sequence to be reversed as well.
The proofs involve a new notion of equivalence of rotor configurations, 
and an extension of rotor walk incorporating time-reversed particles.
\end{abstract}

\maketitle

\section{Introduction} \label{sec-intro}

A rotor walk in a graph $G$ is a walk in which the sequence of exits
from each vertex is periodic.  The sequence of exits from a vertex $v$
is called the \emph{rotor mechanism} at $v$.  Rotor walks have been
studied in combinatorics as deterministic analogues of random walks,
in computer science as a means of load-balancing and territory
exploration, and in statistical physics as a model of self-organized
criticality.  In this paper, we explore several properties of the
rotor mechanism that imply corresponding properties of the {\it hitting 
sequence} when $G$ comes with a set of designated {\it target vertices}:

\begin{itemize}
\item Given a (periodic) rotor mechanism at each vertex,
the hitting sequence of the associated rotor walk
is periodic (Theorem~\ref{thm-periodic}).
\item If every rotor mechanism is palindromic, then the hitting sequence is
palindromic (Theorem~\ref{thm-palindromic}).
\item If every rotor mechanism is $m$-repetitive, then the hitting sequence
is $m$-repetitive (Theorem~\ref{thm-repeat}).
\end{itemize}

See below for precise definitions.  Since the rotor mechanisms are
local features of the walk --- each one depends only on the exits from a
particular vertex --- while the hitting sequence is a global feature, 
we regard these theorems as \emph{local-global principles}.

Let $G = (V,E)$ be a strongly connected finite directed graph, 
with self-loops and multiple edges permitted.
For a vertex $v \in V$, let $d(v)$ denote the outdegree of $v$.  
A \emph{rotor mechanism} at $v$ is an ordering of the directed edges 
(or ``arcs'') emanating from $v$, say as $e_v^{i}$ for $1 \leq i \leq d(v)$.
Let $v^{i}$ denote the endpoint of the arc $e_v^{i}$.
We extend the definition of $e_v^{i}$ and $v^{i}$ to all $i \in \Z$
by taking $e_v^{i}$ and $v^{i}$ to be periodic in $i$ with period $d(v)$.  
We often indicate the rotor mechanism at $v$ using the notation
\[ v \rightarrow v^{1},v^{2},\dots,v^{d(v)},\dots 
\ \mbox{(period $d(v)$)}. \]
Given a rotor mechanism at each vertex $v$, a {\it rotor walk\/} 
on $G$ is a finite or infinite sequence of vertices $x_0,x_1,x_2,\dots$ 
in which the $i$-th occurrence of $v$ is followed immediately 
by an occurrence of $v^{i}$.
For example, if the vertex set of $G$ is $\{1,2,3,4,5\}$
and the rotor mechanisms are
\begin{eqnarray*}
1 & \rightarrow & 3,4,5,\dots \ \mbox{(period 3)} \\
2 & \rightarrow & 3,\dots \ \mbox{(period 1)} \\
3 & \rightarrow & 4,2,\dots \ \mbox{(period 2)} \\
4 & \rightarrow & 1,\dots \ \mbox{(period 1)} \\
5 & \rightarrow & 1,\dots \ \mbox{(period 1)}
\end{eqnarray*}
then the rotor walk starting from 1 is
\[ 1,3,{\bf 4},1,{\bf 4},1,{\bf 5},
1,3,2,3,{\bf 4},1,{\bf 4},1,{\bf 5},
1,3,2,3,{\bf 4},1,\dots \]
which is eventually periodic with period 9.  Note that this sequence is not 
itself periodic (the initial $1$ does not repeat) but if we isolate the terms 
equal to $4$ or $5$ we obtain the sequence 
	\[ 4,4,5,4,4,5,\ldots \]
which is periodic with period 3.

In general, we assume that $G$ 
comes with a designated {\it source vertex\/} $s$
that serves as the starting point of the rotor walk
and a non-empty set $T$ of designated {\it target vertices\/}
such that all arcs emanating from a target vertex $t \in T$ go to $s$.  
In the example above, $s=1$ and $T=\{4,5\}$.  
A rotor walk starts at the source vertex 
and always returns to the source vertex
immediately after visiting a target vertex.  
We define the {\it hitting sequence\/}
as the subsequence of the rotor walk 
consisting of the terms that belong to $T$.  
In the above example, the hitting sequence is $4,4,5,4,4,5,\ldots$.

It is easy to show that the hitting sequence is infinite
(that is, the set of targets is visited infinitely often);
see Lemma~\ref{prop-infinite}, below.
As explained in \textsection\ref{sec-prelim}, it is also easy to show that
the hitting sequence is eventually periodic. 
Our first main result goes further:

\begin{thm} \label{thm-periodic}
The hitting sequence determined by a (periodic) rotor mechanism is periodic.
\end{thm}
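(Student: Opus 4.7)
The plan is to realize the hitting sequence as the forward orbit of a single map $\Phi$ on a finite set, and then to show that $\Phi$ is a bijection. Once that is done, every orbit of $\Phi$ is a cycle rather than a tail leading into a cycle, so the hitting sequence will be periodic from the start rather than merely eventually periodic.

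Concretely, let $\mathcal{R}$ be the (finite) set of rotor configurations on $G$, and define the \emph{return map} $\Phi \colon \mathcal{R} \to \mathcal{R}$ as follows: starting at $s$ in configuration $\rho$, run rotor walk until the first target is hit, and let $\Phi(\rho)$ be the resulting rotor configuration (so that the next step would return the particle to $s$). The map is well-defined by Lemma~\ref{prop-infinite}. Moreover, the target hit during this run, together with the post-visit configuration, is determined by $\rho$ alone, so the full hitting sequence is determined by the forward orbit $\rho_0, \Phi(\rho_0), \Phi^2(\rho_0), \ldots$ of the initial configuration $\rho_0$. Consequently, periodicity of the hitting sequence is equivalent to periodicity from the start of this orbit. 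Because $\mathcal{R}$ is finite and the orbit is eventually periodic, it suffices to show that $\Phi$ is injective: then $\Phi$ is a bijection of $\mathcal{R}$, and every point of $\mathcal{R}$ lies on a cycle.

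Injectivity of $\Phi$ is the main obstacle. The naive approach — inverting rotor walk one step at a time on pairs (current vertex, configuration) — is doomed, because a vertex $w$ may have several in-neighbors $v$ whose rotor is in a state consistent with $v$ being the immediate predecessor of $w$; any in-neighbor of $w$ with out-degree $1$ already provides such an ambiguity. To get around this, I would construct a \emph{reverse rotor walk} by enlarging the state space to incorporate time-reversed particles, as foreshadowed in the abstract. On this enlarged space the one-step dynamics should be a genuine bijection, and running a reverse round-trip from $s$ back to $s$ should undo the forward round-trip computed by $\Phi$. That yields an explicit inverse to $\Phi$, hence periodicity of the orbit of $\rho_0$, hence periodicity of the hitting sequence.
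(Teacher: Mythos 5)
Your high-level strategy---realize the hitting sequence as the orbit of a return map on a finite set and prove that map is a bijection---is exactly the paper's, and you correctly identify injectivity as the crux. But the claim you need, that $\Phi = E_s$ is injective on the set $\mathcal{R}$ of \emph{all} rotor configurations, is false, and no reverse walk can rescue it, because the orbit of $\rho_0$ in $\mathcal{R}$ is genuinely not periodic in general. Concretely, take $V=\{s,v,t\}$ with $T=\{t\}$, arcs $s\to v$, $v\to t$, $v\to s$, $t\to s$, and rotor mechanism $v \to t,s,\dots$ (period $2$). If $\rho_1(v)=(v,s)$ and $\rho_2(v)=(v,t)$, then a particle released at $s$ reaches $t$ in both cases and leaves the rotor at $v$ pointing to $t$ in both cases, so $E_s\rho_1=E_s\rho_2=\rho_2$ while $\rho_1\neq\rho_2$; the orbit of $\rho_1$ is $\rho_1,\rho_2,\rho_2,\dots$, eventually periodic but not periodic, even though the hitting sequence $t,t,\dots$ is periodic. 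The information $\Phi$ destroys is exactly the set of cycles the particle traverses: the antiparticle round trip retraces only the \emph{loop-erasure} of the particle's path (Lemma~\ref{prop-loop-erase}), so it inverts $E_s^{+}$ only up to the relation $E_s^{-}E_s^{+}\rho\equiv\rho$, not on the nose. Enlarging the state space cannot help either: if $\Phi$ on $\mathcal{R}$ were a factor of a bijection of a \emph{finite} set, every $\Phi$-orbit would be periodic, which the example above rules out.

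The paper's fix is to pass to a \emph{quotient} rather than an enlargement. One declares $\rho\equiv\rho'$ iff $\sigma\rho=\sigma\rho'$ for some particle configuration $\sigma$ (Lemma~\ref{prop-equiv-tfae}), shows that $E_s$ descends to a bijection of the finite set of equivalence classes (Corollary~\ref{prop-permutation-too}, via the fact that each class contains a unique acyclic configuration and the addition operators permute acyclic configurations), and---this is the step your sketch has no analogue of---shows that the target hit by a particle released at $s$ depends only on the equivalence class of the rotor configuration (Lemma~\ref{prop-same-target}). Then $[\rho_0],[\rho_1],\dots$ is periodic and the hitting sequence is a function of it, which proves the theorem even though $\rho_0,\rho_1,\dots$ itself is not periodic. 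Your antiparticle instinct is not wasted: it is the engine of the paper's proof of Theorem~\ref{thm-reversal}, where $E_s^{-}E_s^{+}\rho\equiv\rho$ is used precisely at the level of equivalence classes.
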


As we have already seen, 
the rotor walk itself is typically \emph{not} periodic.
Let $\gamma_n$ be the portion of the walk 
strictly between the $n$-th and $(n+1)$-st visits to $T$.  
In the preceding example, the sequence $\{\gamma_n\}_{n \geq 1}$ is
	\[ 13, 1, 1, 1323, 1, 1, 1323, 1, 1, \ldots. \]
In general, this sequence is eventually periodic but is not periodic.

A natural question is how to determine the period of the hitting sequence.  
We will see that this period divides the order of a certain element 
of the \emph{sandpile group} $S(G/T)$ of the graph $G$ 
with the target set $T$ collapsed to a single vertex 
(Lemma~\ref{prop-theperiod}).

Our second main result states that 
if we {\it reverse\/} the rotor mechanism at each vertex by replacing
$$v \rightarrow v^{1},v^{2},\dots,v^{d(v)},\dots 
\ \mbox{(period $d(v)$)}$$
by
$$v \rightarrow v^{d(v)},v^{d(v)-1},\dots,v^{1},\dots 
\ \mbox{(period $d(v)$)}$$
for each vertex $v$,
then the hitting sequence undergoes an analogous reversal;
specifically, if the original hitting sequence has period $D$,
then the new hitting sequence will also have period $D$,
and for all $1 \leq i \leq D$ the $i$-th term of the new hitting sequence
will equal the $(D+1-i)$-th term of the original hitting sequence.
That is:

\begin{thm} \label{thm-reversal}
Reversing the periodic pattern of all rotor mechanisms
results in reversing the periodic pattern of the hitting sequence.
\end{thm}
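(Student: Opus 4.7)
The plan is to exploit the extension of rotor walk to time-reversed particles that the abstract mentions. I would define a single-step dynamical system on pairs $(\rho, v)$ consisting of a rotor configuration $\rho$ and a particle location $v$, with two flavors of step: a \emph{forward} step (advance the rotor at $v$, then move the particle along the new rotor), and a \emph{backward} step (move the particle along the current rotor, then retract the rotor at the \emph{destination}). These are set up so that a backward step exactly inverts a forward step: if a forward rotor walk traces a path $s = v_0, v_1, \ldots, v_k = t$ and leaves rotor configuration $\rho'$, then the backward walk started from $(\rho', t)$ traces the same path in reverse and ends at $(\rho, s)$.

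Next I would introduce a reversal involution $\rho \mapsto \bar\rho$ on rotor configurations: if $\rho(v)$ is the $i$-th arc $e_v^i$, let $\bar\rho(v)$ be the arc corresponding in the reversed mechanism to ``the arc just emitted from $v$ was $e_v^i$.'' The purpose is to arrange matters so that a backward step in the original mechanism, starting from $(\rho, v)$, coincides with a forward step in the reversed mechanism, starting from $(\bar\rho, v)$. Verifying this is a direct calculation once one fixes the convention for whether $\rho(v)$ records the ``most recently used'' or ``next to use'' arc, and how that convention transforms under mechanism reversal.

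Composing the two correspondences proves the theorem. Run the original rotor walk for one full hitting period $D$; by Lemma~\ref{prop-theperiod} the rotor configuration returns to its initial value $\rho_0$, and we have traced out a concatenation of $D$ forward walks from $s$ to $T$ realizing the hitting sequence $h_1, \ldots, h_D$. The forward/backward inversion turns this into a backward walk, traversed in reverse time, that emits the targets in the order $h_D, \ldots, h_1$ while returning the rotor configuration to $\rho_0$. Applying the involution $\rho \mapsto \bar\rho$ to every intermediate rotor configuration converts this backward walk in the original mechanism into a \emph{forward} walk in the reversed mechanism starting from $\bar\rho_0$; that walk therefore has hitting sequence $h_D, h_{D-1}, \ldots, h_1$ and period $D$.

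The main obstacle I expect is the bookkeeping in establishing that $\rho \mapsto \bar\rho$ intertwines the two dynamics exactly, not merely up to some equivalence. Forward rotor walk is not by itself reversible — given only the present state, one cannot tell which vertex moved last — so the backward walk must be supplied with the current particle location as auxiliary data, and one must check step by step that the involution sends the correct backward successor to the correct forward successor (in the reversed mechanism) at each vertex. I anticipate that the cleanest way to perform this verification is in terms of the equivalence classes of rotor configurations introduced earlier in the paper, so that the identification holds as an equality of classes even if the raw rotor configurations differ by a popped cycle in some intermediate steps.
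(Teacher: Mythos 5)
Your overall instinct---introduce time-reversed particles and an involution on rotor states, then transport one full period of the walk---is the right one, but the specific mechanism you propose does not exist, and the gap is not mere bookkeeping. A ``backward step'' that exactly inverts a forward step cannot be made into a deterministic dynamical system on pairs $(\rho,v)$: after a forward step the particle sits at some vertex $w$, and neither $\rho$ nor $w$ records which in-neighbor of $w$ moved last, so the state does not determine which rotor to retract (supplying the particle location as auxiliary data, as you suggest, does not help---the location is already part of your state). Worse, even if you carry the full history so that the undo is well defined, the resulting reverse-time walk traverses arcs of $G$ \emph{against} their orientation, from $T$ back to $s$, whereas a forward rotor walk in the reversed mechanism traverses arcs with their orientation, from $s$ to $T$. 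No involution $\rho\mapsto\bar\rho$ on rotor configurations can intertwine a step that sends the particle to an in-neighbor with a step that sends it to an out-neighbor, so the identification in your third paragraph already fails at the level of single steps.

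The paper's antiparticle is a genuinely different object, and the differences are exactly what make the proof work. An antiparticle at $v$ moves \emph{forward} along the current retrospective arc $\rho(v)$ and then regresses the rotor at $v$ itself (not at the destination); this is deterministic and, under stack flipping $\Phi$, is literally a particle step of the reversed mechanism (Lemmas~\ref{prop-antiparticle} and~\ref{prop-switch}). The price is that $E_v^-$ does \emph{not} invert $E_v^+$ exactly: the antiparticle follows the loop-erasure of the particle's path, not its reversal, and the cycles popped by the particle are never restored, so one only gets $E_v^-E_v^+\rho\equiv\rho$ up to the equivalence relation of \textsection\ref{subsec-equiv} (Lemma~\ref{prop-loop-erase}). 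The argument is then completed not step by step but run by run: the induction $\eta_i\equiv\rho_{D-i}$ on equivalence classes, combined with the fact that the target hit depends only on the equivalence class (Lemmas~\ref{prop-same-target} and~\ref{prop-anti-equivalent}), yields the reversed hitting sequence. Your closing remark that the verification should be carried out on equivalence classes points in the right direction, but without replacing exact step-by-step inversion by the loop-erasure statement and the per-run relation $E^-E^+\equiv\mathrm{id}$, the proof as outlined cannot be completed.
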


E.g., for the above example, the reversed rotor mechanism
\begin{eqnarray*}
1 & \rightarrow & 5,4,3,\dots \ \mbox{(period 3)} \\
2 & \rightarrow & 3,\dots \ \mbox{(period 1)} \\
3 & \rightarrow & 2,4,\dots \ \mbox{(period 2)} \\
4 & \rightarrow & 1,\dots \ \mbox{(period 1)} \\
5 & \rightarrow & 1,\dots \ \mbox{(period 1)}
\end{eqnarray*}
gives the reversed hitting sequence $5,4,4,5,4,4,\dots$.

An immediate corollary of Theorem~\ref{thm-reversal}
is that if the rotors are all {\it palindromic\/}
(that is, if each fundamental period of each rotor
reads the same backwards and forwards)
then the same is true of the hitting sequence.

\begin{thm} \label{thm-palindromic}
If all rotor mechanisms are palindromic,
then the hitting sequence is palindromic.
\end{thm}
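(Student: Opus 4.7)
My plan is to deduce Theorem~\ref{thm-palindromic} directly from Theorem~\ref{thm-reversal}, as the paper itself suggests by labeling the result an immediate corollary. The strategy is to show that the palindromic hypothesis makes the reversal operation a no-op on rotor mechanisms, so that Theorem~\ref{thm-reversal} forces the hitting sequence to coincide with its own reversal.

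The key observation I would record first is the following: reversing a palindromic rotor mechanism leaves it unchanged as a periodic sequence. Explicitly, if the mechanism at $v$ is $v \rightarrow v^1, v^2, \ldots, v^{d(v)}$ and is palindromic in the sense that $v^i = v^{d(v)+1-i}$ for $1 \leq i \leq d(v)$, then the reversed mechanism $v \rightarrow v^{d(v)}, v^{d(v)-1}, \ldots, v^1$ is termwise identical to the original. Thus, under the hypothesis that \emph{every} rotor mechanism is palindromic, the entire reversed rotor system coincides arc-for-arc with the original.

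With this observation in hand I would invoke Theorem~\ref{thm-reversal}. Let $(h_1, h_2, \ldots, h_D)$ denote one period of the hitting sequence of the original system. Theorem~\ref{thm-reversal} asserts that the reversed system has a hitting sequence of the same period $D$, whose $i$-th term equals $h_{D+1-i}$. But by the previous paragraph the reversed system is \emph{identically} the original system, so its hitting sequence must again be $(h_1, \ldots, h_D)$. Comparing the two descriptions gives $h_i = h_{D+1-i}$ for every $i$, which is precisely the statement that the hitting sequence is palindromic.

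There is no substantive obstacle here, since the hard analytic content lives entirely in Theorem~\ref{thm-reversal}. The only step requiring any care is verifying that the definition of ``palindromic'' (a fundamental period reading the same forwards and backwards) matches exactly the condition under which the reversal operation on rotors acts trivially; the termwise check above confirms this, so the corollary falls out in one line once Theorem~\ref{thm-reversal} is available.
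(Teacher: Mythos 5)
Your proposal is correct and matches the paper exactly: the paper gives no separate proof of Theorem~\ref{thm-palindromic}, deriving it as an immediate corollary of Theorem~\ref{thm-reversal} via precisely the observation you make, namely that the palindromic condition $v^i = v^{d(v)+1-i}$ makes reversal act trivially on every rotor mechanism, so the hitting sequence must equal its own reversal. The termwise verification you include is the only content needed, and it is carried out correctly.
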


One can think of the entire collection of rotor mechanisms on $G$ 
as a single rotor --- 
perhaps embedded as a component of a larger system --- 
whose rotor mechanism is the hitting sequence.
From this perspective, 
Theorems~\ref{thm-periodic} and~\ref{thm-palindromic}
are local-to-global principles
asserting that if the sequence of exits from each vertex
possesses a certain property (periodicity, palindromicity), then
the hitting sequence has the same property.
We now state one more result of this type, Theorem~\ref{thm-repeat}.  
Further examples of local-global principles 
include Lemma~\ref{prop-infinite}, below, and \cite[Theorem~1]{HP10}.

Call a sequence $\{u_i\}_{i\geq 1}$ \emph{$m$-repetitive} 
if it consists of blocks of $m$ consecutive equal terms; that is,
	\[ u_{am+1} = u_{am+2} = \ldots = u_{am+m} \]
for all $a \geq 0$.  

\begin{thm} \label{thm-repeat}
If all rotor mechanisms are $m$-repetitive, 
then the hitting sequence is $m$-repetitive.
\end{thm}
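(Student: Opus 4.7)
My plan is to exploit the abelian property of rotor-routing by comparing two different firing strategies for $m$ chips launched from $s$. Regarding the target set $T$ as a collection of sinks, the classical abelian property asserts that if $m$ chips are placed at $s$ and fired (in any legal order) until all are absorbed in $T$, then the multiset of absorption locations and the resulting rotor configuration are independent of the firing order. The key observation is that a sequential firing, in which each chip is fully absorbed before the next is released, simulates exactly the first $m$ excursions of the rotor walk and produces the multiset $\{t_{1},\ldots,t_{m}\}$.

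Against this I would set a \emph{group firing} strategy: whenever some non-target vertex $v$ holds at least $m$ chips, fire $m$ of them from $v$ simultaneously. Starting from $m$ chips at $s$ with every rotor at its initial position $1$, each rotor is at the start of an $m$-block, so the first group firing sends all $m$ chips along the same edge to $s^{1}$ and leaves the rotor at $s$, advanced by $m$, again at the start of an $m$-block. Inductively, the ``super-chip'' of $m$ chips moves as a single unit, tracing out the rotor walk driven by the compressed mechanism $v \mapsto v^{1},v^{m+1},v^{2m+1},\ldots$, until it reaches some target $t^{\ast}$ (finiteness is guaranteed by the argument underlying Lemma~\ref{prop-infinite}), at which instant all $m$ chips are absorbed at once.

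Abelianness then forces $\{t_{1},\ldots,t_{m}\} = \{t^{\ast},\ldots,t^{\ast}\}$, and therefore $t_{1}=t_{2}=\cdots=t_{m}=t^{\ast}$. Moreover, under group firing every vertex is visited a multiple of $m$ times, and since the odometer function is also order-invariant, the same holds for the sequential firing. Hence after the first $m$ excursions every rotor has advanced by a multiple of $m$ and is once more at the start of an $m$-block. I can therefore iterate: for each $a \geq 0$, the rotor configuration entering excursion $am+1$ is ``aligned'' (all rotors at positions $\equiv 1 \pmod{m}$), and the same group-firing comparison yields $t_{am+1}=\cdots=t_{am+m}$, proving that the hitting sequence is $m$-repetitive.

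I expect the most delicate point to be verifying that the group firing is a legal firing sequence in the abelian sense, and that the simulated compressed rotor walk really does reach a target in finitely many steps; both follow from the $m$-repetition of the rotors, the inductively maintained alignment of the configuration throughout group firing, and the finiteness of rotor walks on strongly connected graphs. Once these points are in place, the rest of the argument is pure bookkeeping through the abelian property.
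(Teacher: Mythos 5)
Your argument is correct and is essentially the paper's: both rest on the abelian property applied to a firing order in which $m$ aligned chips move as a single unit along a common edge (the rotors remaining aligned modulo $m$ throughout), forcing each block of $m$ consecutive excursions to hit a single target. The only difference is bookkeeping: the paper feeds $mn$ chips from the initial configuration for every $n$ and takes differences of the resulting target counts between $mn$ and $m(n+1)$, which sidesteps your per-block alignment induction and your appeal to order-invariance of the odometer --- a true and standard fact, but slightly stronger than what is literally stated in Lemma~\ref{prop-abelian}.
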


For example, consider the $2$-repetitive rotor mechanism
\begin{eqnarray*}
1 & \rightarrow & 3,3,2,2,\dots \ \mbox{(period 4)} \\
2 & \rightarrow & 1,1,4,4,\dots \ \mbox{(period 4)} \\
3 & \rightarrow & 1,\dots \ \mbox{(period 1)} \\
4 & \rightarrow & 1,\dots \ \mbox{(period 1)}
\end{eqnarray*}
with source 1 and targets 3 and 4.  
The sequence of paths $\gamma_n$ taken by the walker until it hits a target
	\[ 13, 13, 121213, 13, 124, 124, \dots \ \mbox{(period 6)}  \]
is not $2$-repetitive, but the hitting sequence
	\[ 3,3,3,3,4,4,  \dots \ \mbox{(period 6)} \]
is $2$-repetitive.

The proof of Theorem~\ref{thm-repeat} is not difficult 
(see \textsection\ref{sec-prelim})
and uses only the abelian property of rotor walk (Lemma~\ref{prop-abelian}).
The proofs of Theorems~\ref{thm-periodic}-\ref{thm-palindromic} 
make essential use of a new notion of equivalence of rotor configurations.  
We summarize the highlights here, 
referring the reader to \textsection\ref{subsec-equiv} for the full definitions.

Let $V_0 = V - T$.
A \emph{rotor configuration} is a map $\rho : V_0 \to E$ such that 
$\rho(v)$ is an arc emanating from $v$; 
the arc $\rho(v)$ represents the arc 
by way of which a particle most recently exited vertex $v$.  
A \emph{particle configuration} is a map $\sigma : V_0 \to \N$; 
we interpret $\sigma(v)$ as the number of particles present at vertex $v$.  
Following~\cite{HLMPPW} we define 
an action $(\sigma, \rho) \mapsto \sigma \rho$ 
of particle configurations on rotor configurations.  
We then define rotor configurations $\rho_1$ and $\rho_2$ to be equivalent, 
written $\rho_1 \equiv \rho_2$, 
if there exists a particle configuration $\sigma$ such that 
$\sigma \rho_1 = \sigma \rho_2$
(Lemma~\ref{prop-equiv-tfae} will show that this is an equivalence relation.
In fact, $\rho_1 \equiv \rho_2$ if and only if
$\sigma \rho_1 = \sigma \rho_2$ for all ``sufficiently large'' $\sigma$,
in a sense made precise by part (d) of Lemma~\ref{prop-equiv-tfae}.)
We define an operation called \emph{complete cycle pushing} 
which takes an arbitrary rotor configuration $\rho$ as input 
and produces an acyclic rotor configuration $\rho^\dagger$ as output.

\begin{thm} ~
\begin{enumerate}
\item[(i)]
Each equivalence class of rotor configurations 
contains a unique acyclic configuration.  
\item[(ii)] The unique acyclic configuration equivalent to $\rho$ is $e\rho$, 
where $e$ is the recurrent identity element of the sandpile group.  
\item[(iii)] $e\rho = \rho^\dagger$ is the result of 
performing complete cycle pushing on $\rho$.
\end{enumerate}
\end{thm}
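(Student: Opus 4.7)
The plan is to prove part (iii) head-on and then deduce (i) and (ii) by a counting-based uniqueness argument. The pivotal observation is that a single cycle push preserves the equivalence class: if $\rho'$ is obtained from $\rho$ by advancing every rotor along a directed cycle $C$, then for any particle configuration $\sigma \geq 1_C$ one legal partial stabilization of $(\sigma,\rho)$ is to fire each vertex of $C$ exactly once; each such vertex sends one chip to and receives one from its cyclic neighbors, so the chip counts are unchanged while every rotor of $C$ has advanced by one step. Invoking the abelian property (Lemma~\ref{prop-abelian}), the stabilization can be completed from $(\sigma,\rho')$ and yields the same final rotor configuration, so $\sigma\rho = \sigma\rho'$ for all sufficiently large $\sigma$; hence $\rho\equiv\rho'$ by Lemma~\ref{prop-equiv-tfae}.

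Next I would argue that complete cycle pushing terminates with an acyclic $\rho^\dagger$. The cleanest way to see this is to embed the pushing inside a genuine stabilization: add $1_{V_0}$ chips to $\rho$ and, as the first phase of stabilizing, push cycles as long as they exist. Each push leaves the chip configuration equal to $1_{V_0}$ and is therefore a legitimate prefix of the stabilization of $(1_{V_0},\rho)$; since the full stabilization uses only finitely many firings, so does the pushing phase. The resulting configuration is acyclic by the stopping rule, and iterating the previous step yields $\rho \equiv \rho^\dagger$.

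The core of the argument is uniqueness: each equivalence class contains at most one acyclic configuration. My plan is to count on both sides. By the matrix-tree theorem, the number of acyclic rotor configurations equals the number of spanning arborescences of $G/T$ rooted at the collapsed target, which is $|S(G/T)|$. On the other hand, the sandpile group $S(G/T)$ acts on the equivalence classes via $[\sigma]\cdot[\rho] := [\sigma\rho]$ (well-defined via Lemma~\ref{prop-equiv-tfae}), and this action is free and transitive, so there are exactly $|S(G/T)|$ equivalence classes. Combined with the previous step, each class then contains exactly one acyclic representative, proving (i). For (ii) and (iii), the identity status of $[e]$ in $S(G/T)$ gives $e\rho \equiv \rho$, so $e\rho$ is an acyclic element of $\rho$'s class, forcing $e\rho = \rho^\dagger$ by uniqueness.

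The step I expect to be the main obstacle is establishing that the sandpile-group action on equivalence classes is free and transitive, which is what pins the class count at $|S(G/T)|$ and cannot be bypassed by Steps 1--2 alone. A potentially cleaner alternative route to uniqueness, avoiding the group-action machinery, is direct: for acyclic $\rho_1,\rho_2$ with $\sigma\rho_1 = \sigma\rho_2$ for all large $\sigma$, compare the odometers of the two stabilizations; since each final rotor depends only on the initial rotor and the odometer taken modulo $d(v)$, showing the two odometers agree modulo $d(v)$ at every vertex would force $\rho_1 = \rho_2$.
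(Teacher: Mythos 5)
Your Step 1 (a single cycle push preserves the equivalence class, by firing each vertex of the cycle once and invoking the abelian property) is exactly the paper's Lemma~\ref{prop-push} and is correct. But there are genuine gaps downstream. First, your termination argument has the direction of the moves backwards: complete cycle pushing \emph{regresses} the rotors on a cycle of the current configuration, which is the reverse of a firing, not a firing. Firing each vertex of $\mathcal{C}$ once carries $(\sigma,\mathcal{C}\rho)$ \emph{to} $(\sigma,\rho)$, not the other way around, so a sequence of cycle pushes is not a prefix of the stabilization of $(1_{V_0},\rho)$ and its length cannot be bounded by that stabilization. (The idea is rescuable: if pushing ran forever, finiteness of $R$ would force a repeated configuration, and reversing the pushes between the repeats would yield an infinite firing sequence from a fixed state that never reaches $T$, contradicting strong convergence. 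The paper instead argues directly in Lemma~\ref{prop-terminate}: a vertex with an arc to $T$ eventually freezes, then its in-neighbors freeze, and so on by strong connectivity.)

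Second, and more seriously, your uniqueness step rests on the sandpile group acting freely and transitively on the set of equivalence classes (plus a matrix-tree count), and as you acknowledge you cannot yet prove that; the natural proof of it goes \emph{through} statement (i), so the route risks circularity. The paper sidesteps this entirely: each $E_\sigma$ permutes the finite set of acyclic configurations (Lemma~\ref{prop-permutation}), and $e$ is idempotent, so $E_e$ is an idempotent permutation and hence the identity on acyclic configurations; thus every acyclic $\rho$ satisfies $\rho=e\rho$ (Lemma~\ref{prop-recur-tfae}). Uniqueness is then a one-liner: if $\rho_1\equiv\rho_2$ are both acyclic, then $\rho_1=e\rho_1=e\rho_2=\rho_2$ by condition (c) of Lemma~\ref{prop-equiv-tfae}. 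The same fact closes the remaining hole in your argument for (ii)--(iii), where you assert without justification that $e\rho$ is acyclic: since $e(e\rho)=(e^2)\rho=e\rho$, the configuration $e\rho$ is recurrent, and recurrent rotor configurations are acyclic (Lemma~\ref{prop-recur-tfae} again). I would replace the counting argument with this short one; what the counting approach would buy, if the free transitive action were available, is an independent proof that the number of equivalence classes equals the number of spanning forests, but that is not needed for the theorem.
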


To see the relevance of this notion of equivalence 
to Theorem~\ref{thm-periodic}, 
let $\rho_n$ be the rotor configuration 
immediately after the rotor walk hits the target set $T$ for the $n$-th time.  
The sequence $\{\rho_n\}_{n \geq 0}$ is not periodic, but we will show that 
the sequence of equivalence classes $[\rho_0], [\rho_1], [\rho_2], \dots$ 
is periodic.  We then show that which target is hit by rotor walk 
starting at $s$ with rotor configuration $\rho$ 
depends only on the equivalence class $[\rho]$.

In the proof of Theorem~\ref{thm-reversal}, 
a helpful trick is the use of {\it antiparticles}
that behave like the ``holes'' considered in \cite{FL10}: 
while a particle at vertex $v$ first increments (progresses) the rotor at $v$
and then moves to a neighbor according to the updated rotor, 
an antiparticle at $v$ first moves to a neighbor 
according to the \emph{current} rotor at $v$
and then decrements (regresses) the rotor at $v$.
Reversing the rotor mechanism at each vertex is equivalent 
to replacing all particles by antiparticles and vice versa.

\subsection*{Related Work}
Rotor walk was first studied in computer science from the point of view 
of autonomous agents patrolling a territory \cite{WLB96}, 
and in statistical physics 
as a model of self-organized criticality \cite{PDDK96}.  
It is an example of a ``convergent game'' 
of the type studied by Eriksson \cite{Eri96} 
and more generally of an \emph{abelian network} 
of communicating finite automata.  
Abelian networks were proposed by Dhar \cite{Dha06}, 
and their theory is developed in \cite{BL11}. 

Rotor walk on $G$ reflects certain features of random walk on $G$ \cite{CS06}.
For vertices $v,w \in V$ let $d(v,w)$ be the number of arcs from $v$ to $w$, 
and consider the Markov chain on state space $V$
in which the transition probability from $v$ to $w$
equals $d(v,w)/d(v)$.
The frequency $p_i$ with which a particular target vertex $t$
occurs in the hitting sequence for rotor walk
equals the probability that the Markov chain 
when started from the source $s$ reaches $t$ 
before it reaches any other target vertex.
A main theme of \cite{HP10} and the companion article \cite{Pro11} is that
the ``global'' discrepancy between $n p_i$ and the number of times 
the rotor walk hits the target $t$ in the first $n$ runs is bounded --- 
independently of $n$ --- by a sum of ``local discrepancies'' 
associated with the rotors.

A special case of the periodicity phenomenon was noted by Angel and
Holroyd.  If $G$ is the $b$-regular tree of height $h$ and $T$ is the
set of leaves, it follows from the proof of Theorem 1.1 of \cite{LL09}
that the hitting sequence from the root is eventually periodic with
period $\# T$, and its fundamental period is a permutation of $T$.  In
Proposition 22 of \cite{AH11} Angel and Holroyd prove that for any
initial setting of the rotors, the first $\# T$ terms of the hitting
sequence are in fact a permutation of $T$.


\section{Abelian property, monoid action and group action} \label{sec-prelim}

This section collects the results from the literature that we will use.  All of these can be found in the survey \cite{HLMPPW}, and many date from considerably earlier; where we know of an earlier reference, we indicate that as well.
Let $d(v,w)$ be the number of arcs 
from $v$ to $w$ in the finite directed graph $G$.
Let $d(v) = \sum_{w \in V} d(v,w)$ be the out-degree of $v$.
Let $s$ be a designated source vertex
and $T$ be a set of designated target vertices.  
Arcs emanating from target vertices play no role in our argument; 
however, it can be helpful to imagine that for every target $t \in T$ 
we have $d(t) = d(t,s) = 1$ 
(that is, each target has just one outgoing arc, 
which points back to the source).  
We define $V_0 = V - T$, the set of non-target vertices.
We allow vertices in $V_0$ to have arcs pointing to $s$.

In \textsection\ref{sec-intro},
we defined a rotor walk on $G$ as 
an infinite sequence of vertices $x_0,x_1,x_2,\dots$
in which the $i$-th occurrence of $v$
is followed immediately by an occurrence of $v^{i}$.  
The proofs make use of an alternative, ``stack-based'' 
picture of rotor walk, which we now describe.  
This viewpoint goes back at least to \cite{DF91,Wil96}.

At each vertex $v$ is a bi-infinite stack of cards, 
in which each card is labeled by an arc of $G$ emanating from $v$.
The $i$-th card is labeled by the arc $e_i = (v,v^{i})$.  
For $i>0$, the $i$-th card in the stack represents an instruction for 
where the particle should step upon visiting vertex $v$ for the $i$-th time.  
(When $i<0$, the $i$-th card in the stack never gets used,
but it is helpful to pretend that it was used in the past
before the rotor walk began;
this point of view will play an important role
in the proof of Theorem~\ref{thm-palindromic}.)
We also have a pointer at $v$ that keeps track of
how many departures from $v$ have already occurred;
this pointer moves as time passes.
When $i$ departures from $v$ have occurred during the rotor walk thus far,
we represent the state of the stack and pointer as
\[ [ \dots, e_{i-2}, e_{i-1}, e_{i} | e_{i+1}, e_{i+2}, e_{i+3}, \dots ] \]
($i$ is 0 at the start of the rotor walk).
Arcs $e_{j}$ with $j \leq i$ to the left of the pointer
constitute the ``past'' of the rotor 
(arcs previously traversed), 
while the $e_{j}$'s with $j > i$
constitute the ``future'' of the rotor 
(arcs to be traversed on future visits to $v$).
The arc $e_{i}$ is called the \emph{retrospective state} of the rotor.  
It represents the most recent arc traversed from $v$.  
Arc $e_{i+1}$ is called the \emph{prospective state} of the rotor.  
It represents the next arc to be traversed from $v$.
When the particle next exits $v$ (along arc $e_{i+1}$)
the pointer moves to the right and the stack at $v$ becomes
\[ [ \dots, e_{i-2}, e_{i-1}, e_{i}, e_{i+1} | e_{i+2}, e_{i+3}, \dots ] . \]

The defining property of rotor walk is that for each vertex $v$, 
the sequence of labels in its stack is periodic.
Initially, however, we will not need this assumption.  
We use the term \emph{stack walk} to describe the more general situation 
when the stack at each vertex $v$ may be 
an arbitrary sequence of arcs emanating from $v$.

We will assume throughout that the finite directed graph $G$ 
is \emph{strongly connected}; 
that is, for any pair of vertices $v$ and $w$ there exist 
directed paths from $v$ to $w$ and from $w$ to $v$.  
Note that strong connectivity is a global property of $G$.  In fact, it is the only non-local ingredient needed for our local-global principles.  
The next lemma provides a simple example of 
how strong connectivity parlays a local property --- 
one that can be checked for each stack individually --- 
into a corresponding global property of the hitting sequence.

A sequence $a_1,a_2, \ldots$ whose terms belong to an alphabet $A$ 
is called \emph{infinitive} if for every $a \in A$ 
there are infinitely many indices $i$ such that $a_i=a$ \cite{Kim97}.  
Thus, we say that the stack at vertex $v$ is infinitive 
if every outgoing arc from $v$ appears infinitely often as a label.  
Likewise, the hitting sequence is infinitive 
if the walk hits every target $t$ infinitely often.

\begin{lemma} \label{prop-infinite}
If all stacks are infinitive, then the hitting sequence is infinitive.
\end{lemma}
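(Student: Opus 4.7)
The plan is to let $U \subseteq V$ denote the set of vertices visited infinitely often by the walk started at $s$ (the walk never terminates, since every vertex has outgoing arcs), and to show $U = V$; this gives in particular $T \subseteq U$, which is exactly infinitivity of the hitting sequence. The key local input is a forward-closure property of $U$: for any $v \in V_0 \cap U$ the pointer at $v$ advances infinitely often, so by infinitivity of the stack at $v$ every outgoing arc from $v$ gets traversed infinitely often, whence every out-neighbor of $v$ lies in $U$; for a target $t \in T \cap U$ the convention $d(t)=d(t,s)=1$ makes $s$ its unique out-neighbor, so again the out-neighbor lies in $U$. Thus $U$ is forward-closed in $G$.

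First I would show that $T \cap U \neq \emptyset$. If not, from some time on the walk stays in $V_0$, so the set $U' \subseteq V_0$ of vertices visited infinitely often from that time on is nonempty and, by the same stack-infinitivity reasoning applied to the tail of the walk, forward-closed in $G$. Strong connectivity then yields a directed path from any vertex of $U'$ to some target $t$; traversed arc by arc, the path remains inside $U'$ by forward-closure, forcing $t \in U' \subseteq V_0$, a contradiction.

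Since the walk returns to $s$ immediately after every visit to $T$, the previous step also gives $s \in U$. For an arbitrary $v \in V$ I would then take a directed path $s = v_0 \to v_1 \to \cdots \to v_k = v$ furnished by strong connectivity and prove by induction on $i$ that $v_i \in U$: at each $V_0$-step this is forward-closure, and at any intermediate target step the next vertex is $s \in U$ by the convention. Hence $V \subseteq U$, so every target is hit infinitely often and the hitting sequence is infinitive.

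The only step where any real work happens is the first one: one has to upgrade the \emph{local} hypothesis ``every outgoing arc of $v$ appears infinitely often in the stack at $v$'' to the statement that every out-neighbor of $v$ is actually \emph{visited} infinitely often --- this requires knowing that the pointer at $v$ advances infinitely often, which is where $v \in U$ is used --- and then combines this forward-closure with strong connectivity to break out of the hypothetical trap $U' \subseteq V_0$. Everything else is a routine reachability argument. If one preferred not to adopt the convention $d(t)=d(t,s)=1$, one would simply pick, in the last step, paths from $s$ that avoid intermediate targets, which is always possible by an easy shortening argument.
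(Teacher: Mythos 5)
Your proof is correct and follows essentially the same route as the paper's: finiteness of $G$ gives a vertex visited infinitely often, infinitivity of the stacks makes the set of infinitely-visited vertices forward-closed, and strong connectivity then forces that set to be all of $V$. Your version merely adds some bookkeeping (handling target vertices separately via the convention $d(t)=d(t,s)=1$, and first isolating the claim $T\cap U\neq\emptyset$) that the paper's terser proof leaves implicit.
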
 

\begin{proof}
Since $G$ is finite, the stack walk visits at least one vertex infinitely often.  
If the walk visits~$v$ infinitely often, 
then since the stack at~$v$ is infinitive, 
the walk traverses every outgoing arc from~$v$ infinitely often, 
so it visits all of the out-neighbors of~$v$ infinitely often.  
Since $G$ is strongly connected, 
every vertex is reachable by a directed path of arcs from~$v$, 
so every vertex is visited infinitely often.  
In particular, the walk hits every target infinitely often.
\end{proof}

\subsection{Abelian property}

Suppose that several indistinguishable particles 
are present on vertices of $G$.
At each moment, one has a choice of which particle to move; 
one chooses a particle, 
shifts the pointer in the stack at the corresponding vertex, 
and advances that particle to a neighboring vertex 
according to the instruction on the card that the pointer just passed.  
We call this procedure a \emph{firing}.

For example, if we begin with $m$ particles at the source vertex~$s$,
we can repeatedly advance one of them until it hits a target,
then repeatedly advance another particle until it too hits a target,
and so on, until all the particles have hit (and remain at) targets.

The following lemma is known as the \emph{abelian property} of rotor-routing 
(another name for it is the ``strong convergence property,'' 
following Eriksson \cite{Eri96}).  
For a proof, see \cite[Theorem 4.1]{DF91} or \cite[Lemma 3.9]{HLMPPW}.

\begin{lemma} \label{prop-abelian}
Starting from particle configuration $\sigma$ and rotor configuration $\rho$, 
let $v_1, \ldots, v_m$ be a sequence of firings that results 
in all particles reaching the target set.
Let $N(t)$ be the number of particles that hit target $t$.
The numbers $N(t)$ ($t \in T$) and the final rotor configuration 
depend only on $\sigma$ and $\rho$; 
in particular, they do not depend on the sequence $v_1, \ldots, v_m$.
\end{lemma}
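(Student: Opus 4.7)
The plan is to use a diamond-lemma approach, based on two ingredients: local commutativity of firings at distinct vertices, and a bubbling argument showing that any two legal firing sequences reaching the target set are permutations of each other as multisets of vertices.

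First, I would verify local commutativity: if $v \neq w$ and both vertices have a particle in state $(\sigma, \rho)$, then firing $v$ followed by firing $w$ yields the same state as firing $w$ followed by firing $v$. This is immediate from the definitions because the two firings touch disjoint stacks (those at $v$ and at $w$), and their additive effects on particle counts at $v$, $w$, and the out-neighbors indicated by the respective rotors can be tallied in either order. In particular, each of the two firings remains legal after the other is performed.

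Second, given two legal firing sequences $S_1 = (v_1, \ldots, v_m)$ and $S_2 = (w_1, \ldots, w_n)$ both bringing all particles to $T$, I would prove by induction on $m$ that they are permutations of each other. The base case $m = 0$ forces $S_2$ empty as well, since no non-target vertex can hold a particle. For the inductive step, consider the first firing $v_1$ of $S_1$. The initial configuration has a particle at $v_1$ (otherwise $v_1$ could not be fired first in $S_1$), and the final configuration has no particles at non-target vertices, so since firings at vertices other than $v_1$ can only add to the particle count at $v_1$, the vertex $v_1$ must be fired somewhere in $S_2$ as well. Let $j$ be the least index with $w_j = v_1$. For each $k < j$, firing $w_k \neq v_1$ cannot decrease the particle count at $v_1$, so a particle remains at $v_1$ throughout the execution of $w_1, \ldots, w_{j-1}$. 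This guarantees that legality is preserved as local commutativity is applied repeatedly to swap $w_j$ leftward past $w_{j-1}, w_{j-2}, \ldots, w_1$, producing the legal sequence $(v_1, w_1, \ldots, w_{j-1}, w_{j+1}, \ldots, w_n)$ with the same final state as $S_2$. Dropping the leading $v_1$ from both sequences yields two legal sequences from the post-$v_1$ state that both reach $T$, and the inductive hypothesis applies.

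Finally, since $S_1$ and $S_2$ induce the same odometer $u : V_0 \to \N$ (each vertex is fired the same total number of times), the final rotor at each $v$ is the initial rotor advanced by $u(v)$ positions; so the final rotor configuration depends only on $(\sigma, \rho)$. The final number of particles at each vertex is then determined by $\sigma$, $u$, and the prescribed arc sequences in the stacks; in particular $N(t)$ equals this final count at $t$ because particles at $T$ never move. The main obstacle is the bookkeeping in the bubbling step to ensure every intermediate sequence remains legal, which is handled precisely by the observation that firings at vertices other than $v_1$ cannot remove particles from $v_1$.
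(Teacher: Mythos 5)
Your proof is correct, and it is essentially the standard strong-convergence argument that the paper itself does not reproduce but instead cites (Theorem 4.1 of Diaconis--Fulton \cite{DF91} and Lemma 3.9 of \cite{HLMPPW}): local commutativity of firings at distinct vertices plus an induction that bubbles the first firing of one legal sequence to the front of the other. The key legality observation --- that firings at vertices other than $v_1$ cannot decrease the particle count at $v_1$ --- is exactly the point that makes the swapping step work, and your derivation of the final rotor configuration and of $N(t)$ from the common odometer is sound.
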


The abelian property is all that is needed to prove Theorem~\ref{thm-repeat}, 
which says that if every rotor mechanism is $m$-repetitive, 
then the hitting sequence is $m$-repetitive.

\begin{proof}[Proof of Theorem~\ref{thm-repeat}]
It suffices to show for all $n$ that if we feed $mn$ particles
through the system in succession 
(starting them at $s$ and stopping them when they hit $T$), 
then the number of particles that hit each target is a multiple of $m$;
for, if we know this fact for both $mn$ and $m(n+1)$, 
then it follows that the $(mn+1)$-st through $(mn+m)$-th particles 
must all hit the same target.
By the abelian property (Lemma~\ref{prop-abelian}),
if we let the $mn$ particles walk in tandem,
letting each particle take its $i$-th step
before any particle takes its $(i+1)$-st step, 
then since each stack is $m$-repetitive, 
the particles travel in groups of size $m$,
such that the particles in each group 
travel the same path and hit the same target.
\end{proof}

Note that Theorem~\ref{thm-repeat} did not require the stacks to be periodic.  
Theorems~\ref{thm-periodic}--\ref{thm-palindromic} certainly do require 
periodic stacks, so we make this assumption for the remainder of the paper.  

\subsection{Action of particle configurations on rotor configurations} \label{subsec-action}

Denote by $Q$ the set of particle configurations
        \[ Q = \{\sigma : V_0 \to \N \} \]
and by $R$ the set of rotor configurations
        \[ R = \{\rho : V_0 \to E \mid \source(\rho(v)) = v 
	\mbox{ for all $v \in V_0$} \} \]
where $\source(e)$ denotes the source of the arc $e$.
We give $Q$ the structure of a commutative monoid under pointwise addition.

Next we recall from \cite{HLMPPW} the construction of the action
	\[ Q \times R \to R. \]
Associated to each vertex $v \in V_0$ is a \emph{particle addition operator} $E_v$
acting on the set of rotor configurations: given a rotor configuration $\rho$, 
we define $E_v(\rho)$ as the rotor configuration obtained from $\rho$
by adding a particle at $v$ and letting it perform rotor walk 
until it arrives at a target.
Lemma~\ref{prop-abelian} implies that
the operators $E_v$ commute: $E_v E_w = E_w E_v$ for all $v,w \in V_0$.

Now given a particle configuration $\sigma$ on $G$, we define 
	\[ E_{\sigma} = \prod_{v \in V_0} (E_v)^{\sigma(v)} \] 
where the product denotes composition.  
Since the operators $E_v$ commute, the order of composition is immaterial.  
The action of particle configuration $\sigma$ on rotor configuration $\rho$ 
is defined by $\sigma \rho := E_\sigma(\rho)$.  
In words, $\sigma \rho$ is the rotor configuration obtained from $\rho$ 
by placing $\sigma(v)$ particles at each vertex $v$ 
and letting all particles perform rotor walk until they hit the target set $T$.  
By Lemma~\ref{prop-abelian}, the order in which the walks are performed 
has no effect on the outcome.  
The fact that the operators $E_v$ commute ensures 
that we have a well-defined action, that is, 
$(\sigma_1 + \sigma_2)\rho = \sigma_1 (\sigma_2 \rho)$.

\subsection{The sandpile monoid and its action on rotor configurations} \label{subsec-monoid}

A particle configuration $\sigma$ is called \emph{stable} if
	\[ \sigma(v) \leq d(v)-1 \qquad \mbox{for all \ $v \in V_0$.} \]
If $\sigma$ is not stable, we can \emph{stabilize} it 
by repeatedly {\em toppling} unstable vertices: Set $\sigma_0 = \sigma$, 
choose a vertex $v_0 \in V_0$ such that $\sigma_0(v_0) \geq d(v_0)$ 
and topple it by sending one particle along each outgoing arc from $v_0$.  
The resulting configuration $\sigma_1$ is given by
$$
\sigma_1(w) = \left\{ \begin{array}{ll}
\sigma_0(w) + d(v_0,w) & \mbox{if $w \neq v_0$}, \\
\sigma_0(w) + d(v_0,w) - d(v_0) & \mbox{if $w = v_0$}.
\end{array} \right.
$$
If $\sigma_1$ is not stable, 
choose a vertex $v_1$ such that $\sigma_1(v_1) \geq d(v_1)$ 
and topple it in the same way to arrive at a new configuration $\sigma_2$.  
Strong connectedness ensures that after finitely many topplings 
we reach a stable configuration, 
which is called the \emph{stabilization} of $\sigma$ 
and denoted $\sigma^\circ$.  
The stabilization $\sigma^\circ$ 
does not depend on the order of topplings~\cite{Dha90}.

Let $Q^\circ$ be the set of stable particle configurations.
We give $Q^\circ$ the structure of a commutative monoid with the operation
	\[ (\sigma_1, \sigma_2) \mapsto (\sigma_1 + \sigma_2)^\circ. \]
That is, we sum the configurations pointwise, and then stabilize.  
By comparing two different toppling orders 
to stabilize $\sigma_1+\sigma_2+\sigma_3$, we see that
$ ((\sigma_1 + \sigma_2)^\circ + \sigma_3)^\circ 
= (\sigma_1 + (\sigma_2 + \sigma_3)^\circ)^\circ$, 
which shows that this operation is associative.  
The monoid $Q^\circ$ is called the \emph{sandpile monoid} of $G$; 
its structure has been investigated in~\cite{BT10,CGGMS11}.

\begin{lemma}
\label{prop-monoidaction}
For any particle configuration $\sigma$ and any rotor configuration $\rho$ 
we have \[ \sigma^\circ \rho = \sigma \rho. \]  
Hence, the action of particle configurations on rotor configurations 
descends to an action of the sandpile monoid
	\[ Q^\circ \times R \to R. \]
\end{lemma}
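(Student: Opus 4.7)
The plan is to exhibit a firing sequence that computes $\sigma\rho$ in two stages: first doing the firings that correspond to sandpile topplings (which drain the excess on $V_0$ down to $\sigma^\circ$ without altering the rotor configuration), then doing the firings that compute $\sigma^\circ\rho$. Since both stages together form a legal firing sequence from $(\sigma,\rho)$ ending with all particles in $T$, the abelian property (Lemma~\ref{prop-abelian}) will force the final rotor configuration to equal both $\sigma\rho$ and $\sigma^\circ\rho$.

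The key observation is that one sandpile toppling at a vertex $v$ with $\sigma(v)\geq d(v)$ can be implemented as $d(v)$ consecutive rotor firings at $v$. Firing at $v$ is legal throughout because after $k<d(v)$ firings, the pile at $v$ still contains at least $\sigma(v)-k\geq 1$ particles. Each firing advances the pointer at $v$ by one card and sends one particle out along the corresponding arc, so after $d(v)$ firings the pointer has shifted by one full period; because the stack at $v$ is periodic with period $d(v)$, the retrospective state of the rotor at $v$ is exactly what it was before, and the $d(v)$ particles emitted are distributed one along each outgoing arc (with the multiplicity $d(v,w)$ going to $w$). This matches the update rule in the definition of toppling, so the resulting particle configuration on $V_0$ after this batch is exactly the one obtained by toppling $v$ once, and the rotor configuration is unchanged. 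Iterating over any toppling sequence that stabilizes $\sigma$ produces a legal firing sequence from $(\sigma,\rho)$ ending at $(\sigma^\circ,\rho)$ (with whatever particles have spilled into $T$ sitting there harmlessly).

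Now extend this firing sequence by any legal continuation that sends the remaining $\sigma^\circ$ particles on $V_0$ into $T$; the final rotor configuration of the combined sequence equals $\sigma^\circ\rho$ by definition of the action. But this combined sequence is also a valid firing sequence for $(\sigma,\rho)$ terminating with all particles in $T$, so by the abelian property its outcome equals $\sigma\rho$. Hence $\sigma\rho=\sigma^\circ\rho$, proving the first assertion. For the descent to a $Q^\circ$-action, the identity just established gives $\sigma_1\rho=\sigma_1^\circ\rho$, and the associativity $(\sigma_1+\sigma_2)\rho=\sigma_1(\sigma_2\rho)$ from \textsection\ref{subsec-action} together with the first assertion yield $(\sigma_1+\sigma_2)^\circ\rho=(\sigma_1+\sigma_2)\rho=\sigma_1^\circ(\sigma_2^\circ\rho)$, which is exactly compatibility with the sandpile monoid operation $(\sigma_1,\sigma_2)\mapsto(\sigma_1+\sigma_2)^\circ$. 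The only subtle point is verifying that $d(v)$ consecutive firings leave the rotor at $v$ fixed while exactly replicating the toppling move; this is where the periodicity assumption on the stacks, reinstated just before this section, is essential.
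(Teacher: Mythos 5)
Your proof is correct and follows essentially the same route as the paper's: both implement each sandpile toppling as a batch of $d(v)$ consecutive rotor firings at $v$, use the period-$d(v)$ stack to see that the rotor returns to its original state while one particle exits along each arc, and then invoke the abelian property (Lemma~\ref{prop-abelian}) to conclude $\sigma\rho = \sigma^\circ\rho$. Your version merely spells out the legality of the intermediate firings and the compatibility with the monoid operation in more detail than the paper does.
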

%

\begin{proof}
\cite[Lemma 3.12]{HLMPPW}
We compute $\sigma \rho$ by grouping the initial rotor moves into ``batches''
each consisting of $d(v)$ moves from a vertex $v$.  
The net effect of a batch of rotor moves is the same 
as that of a toppling at~$v$: 
the rotor at $v$ makes a full turn, 
so the rotor configuration is unchanged, 
and one particle is sent along each arc emanating from $v$.  
After finitely many batches, we arrive at particle configuration $\sigma^\circ$ 
with rotors still configured as $\rho$.  
Now letting each remaining particle perform rotor walk 
until reaching the target set 
yields the rotor configuration $\sigma^\circ \rho$.
\end{proof}

We may express Lemma~\ref{prop-monoidaction} as a commutative diagram
\begin{diagram}
 Q \times R & \rTo & Q^\circ \times R \\
 & \rdTo & \dTo \\
 & & R
\end{diagram}
where the top arrow is $(\sigma,\rho) \mapsto (\sigma^\circ,\rho)$.

\subsection{The sandpile group and its action on spanning forests} \label{subsec-group}

We say that a stable particle configuration $\tau \in Q^\circ$ 
is \emph{reachable} from a particle configuration $\sigma$ if there exists 
a particle configuration $\tau'$ such that $\tau = (\tau' + \sigma)^\circ$.
We say that $\tau$ is \emph{recurrent} 
if it is reachable from any $\sigma \in Q$.

Note that if $\tau$ is recurrent, 
then for any $\sigma \in Q$ there exists $\tau_1$ such that 
$\tau_1(v) \geq \sigma(v)$ for all $v$ and $\tau_1^\circ = \tau$;
indeed, since $\tau$ is reachable from $\sigma$, 
there exists $\tau' \in Q$ with $\tau = (\tau'+ \sigma)^\circ$, 
and we can take $\tau_1 = \tau' + \sigma$.

Denote by $S(G/T)$ the set of recurrent particle configurations.
If $\tau$ is recurrent and $\sigma$ is any particle configuration, 
then $(\sigma + \tau)^\circ$ is recurrent. 
That is, the set $S(G/T)$ 
is an \emph{ideal} of the monoid $Q^\circ$.  
In fact, $S(G/T)$ is the minimal ideal of $Q^\circ$, 
which shows that it is an abelian group \cite{BT10}.  
This group is called the \emph{sandpile group} of $G$ 
relative to the target set $T$. 
The set $T$ plays the role of the sink vertex in \cite{HLMPPW}.  
In the terminology of that paper, 
$S(G/T)$ is the sandpile group of the graph $G/T$ 
obtained by collapsing $T$ to a single vertex.

A rotor configuration $\rho$ is \emph{acyclic} if 
the graph $(V,\rho(V_0))$ contains no oriented cycles
(where $\rho(V_0) = \{\rho(v): \ v \in V_0\}$).
Equivalently, the rotors $\{\rho(v)\}_{v \in V_0}$ 
form an oriented spanning forest of $G$ rooted at $T$.

\begin{lemma} \cite{HLMPPW} \label{prop-permutation}
Each addition operator $E_{\sigma}$ acts as a permutation
on the set $R_0$ of acyclic rotor configurations.  
Thus the action of the sandpile monoid $Q^\circ$ on rotor configurations 
restricts to an action
	\[ S(G/T) \times R_0 \to R_0 \]
of the sandpile group $S(G/T)$ on acyclic rotor configurations.  
\end{lemma}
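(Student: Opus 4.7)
The plan is to verify three items in order: (I) each addition operator $E_v$ sends $R_0$ into $R_0$ (and hence so does $E_\sigma$ by composition); (II) the restriction $E_v \colon R_0 \to R_0$ is a bijection, hence (since $R_0$ is finite) a permutation; and (III) the resulting $Q^\circ$-action on $R_0$ restricts to the sub-monoid $S(G/T)$ to yield the claimed group action.

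For Step (I), I would argue by contradiction: fix $\rho \in R_0$, set $\rho' = E_v \rho$, and suppose $\rho'$ contains a directed cycle $C$. Since each unvisited vertex $u$ has $\rho'(u) = \rho(u)$ and $\rho$ is acyclic, $C$ must contain at least one vertex visited by the particle. Let $w^* \in C$ be the visited vertex on $C$ whose last visit is latest. At that visit the particle departs $w^*$ along $\rho'(w^*)$, which is an arc of $C$ by hypothesis, and arrives at the next vertex $w^{**}$ of $C$. Because $C \subseteq V_0$, the particle does not stop at $w^{**}$ and must subsequently depart from it, so $w^{**}$'s last visit is strictly later than $w^*$'s, contradicting the choice of $w^*$. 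Hence $\rho' \in R_0$.

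Step (II), bijectivity, is the main obstacle. Since $R_0$ is finite, injectivity suffices, and I would deduce it from the sandpile group structure. The recurrent identity $e$ of $S(G/T)$ satisfies $(e + e)^\circ = e$, so by Lemma~\ref{prop-monoidaction} the operator $E_e$ is idempotent on $R$. The key technical claim---and the hardest part---is that $E_e$ acts as the identity on $R_0$; this is a Wilson-style statement, essentially that complete cycle pushing on an acyclic configuration is trivial and that $E_e$ realizes cycle pushing (one groups the rotor moves produced by $E_e$ into full batches that leave each rotor fixed). Granted this, for any $\sigma \in Q^\circ$ a sufficiently large monoid multiple $(N\sigma)^\circ$ lies in the minimal ideal $S(G/T)$ and hence has a group inverse $\mu$; by Lemma~\ref{prop-monoidaction}, $E_\sigma^N \circ E_\mu = E_{((N\sigma)^\circ + \mu)^\circ} = E_e$, which restricts to the identity on $R_0$. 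Thus $E_\sigma^N$ is surjective on the finite set $R_0$, hence bijective, which forces $E_\sigma$ itself to be injective and (again by finiteness) bijective.

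Step (III) is formal: by Steps (I) and (II) the $Q^\circ$-action on $R$ restricts to $Q^\circ \times R_0 \to R_0$; further restricting to the ideal $S(G/T) \subseteq Q^\circ$, whose monoid operation $(\sigma,\tau) \mapsto (\sigma+\tau)^\circ$ agrees with its group operation, yields the group action $S(G/T) \times R_0 \to R_0$. The main obstacle throughout is the identification $E_e|_{R_0} = \mathrm{id}$ in Step (II); everything else is either a clean combinatorial argument or purely formal.
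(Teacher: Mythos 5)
The paper offers no proof of this lemma---it is quoted from \cite{HLMPPW}---so I am comparing your argument against the standard one in that reference. Your Step (I) is correct and is the standard ``last exit'' argument: the retrospective rotors record the last departure from each visited vertex, and following them from the latest-departed vertex of a putative cycle yields a strictly later departure, a contradiction. Your Step (III) is purely formal and fine. In fact your group-theoretic reduction in Step (II) can be streamlined: since $e$ is recurrent it is reachable from $\sigma$ itself, so there is some $\mu$ with $(\sigma+\mu)^\circ=e$, whence $E_\mu\circ E_\sigma=E_e$ by Lemma~\ref{prop-monoidaction}; no passage to a power $E_\sigma^N$ is needed. Granting $E_e|_{R_0}=\mathrm{id}$, injectivity and hence bijectivity of $E_\sigma$ on the finite set $R_0$ follow at once.

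The genuine gap is exactly the claim you flag as the hardest part: that $E_e$ acts as the identity on $R_0$. You do not prove it, and the sketch you offer does not work. Grouping moves into ``full batches that leave each rotor fixed'' is the device used to prove Lemma~\ref{prop-monoidaction}, i.e.\ $\sigma\rho=\sigma^\circ\rho$; but $e$ is already stable, so no batching applies, and after the (empty) toppling phase the $e(v)$ particles must still be routed individually to the targets, which genuinely moves rotors. Showing that the resulting odometer is a multiple of $d(v)$ at every vertex is essentially equivalent to the theorem itself. Worse, within this paper's logical architecture the statement $e\rho=\rho$ for acyclic $\rho$ is implication (b)$\Rightarrow$(c) of Lemma~\ref{prop-recur-tfae}, whose proof the authors explicitly derive \emph{from} Lemma~\ref{prop-permutation} (and likewise Lemma~\ref{prop-final}, the cycle-pushing characterization, sits downstream of it). So your argument assumes a statement that is at least as strong as, and in the standard development a consequence of, the lemma to be proved. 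The non-circular route, taken in \cite{HLMPPW}, is to prove injectivity of $E_v$ on $R_0$ directly---e.g.\ by exhibiting an explicit inverse that reconstructs $\rho$ from $E_v\rho$ by reversing the walk along the recorded last-exit path---and only afterwards deduce that the idempotent permutation $E_e$ must be the identity on $R_0$.
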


A further result proved in \cite{HLMPPW} is that 
this group action is free and transitive.  
In other words, if $\rho$ and $\rho'$ 
are two spanning forests of $G$ rooted at $T$, 
then there is a unique element of the sandpile group $\sigma \in S(G/T)$ 
such that $\sigma \rho = \rho'$.
In particular, the order $|S(G/T)|$ of the sandpile group 
equals the number of acyclic rotor configurations $|R_0|$, 
which is the number of spanning forests of $G$ rooted at $T$.  
We will not use these facts, however, 
except for a brief aside (Lemma~\ref{prop-theperiod}) 
where we identify the period of a sequence that arises 
in the proof of Theorem~\ref{thm-periodic}; 
there we use the freeness of the action.

The identity element $e \in S(G/T)$ is a highly nontrivial object 
(see for instance \cite[Figures 4--6]{HLMPPW})
and plays a role in several of our lemmas below.  
If $G$ has an oriented cycle, 
then $e$ is distinct from the identity element $\mathbf{0}$ of $Q^\circ$ 
because the latter is not recurrent.

\section{Equivalence and cycle pushing} \label{sec-lemmas} 

In this section we develop a new notion of equivalence of rotor configurations
and use it to prove Theorem~\ref{thm-periodic}.
Here and throughout the rest of the article,
we assume that the stack at each vertex $v$
is periodic with period $d(v)$.
For $e = e^{i}_v$
(the $i$th arc in the rotor mechanism at $v$)
we define $e^+ = e^{i+1}_v$ and $e^- = e^{i-1}_v$,
where $i+1$ and $i-1$ are to be interpreted modulo $d(v)$.

\subsection{Equivalence of rotor configurations} \label{subsec-equiv} 

\begin{lemma} \label{prop-equiv-tfae}
Let $\rho$ and $\rho'$ be rotor configurations on $G$.
The following are equivalent:
\begin{enumerate}
\item[\rm (a)]
$\sigma \rho = \sigma \rho'$ for some particle configuration $\sigma$.

\item[\rm (b)] $\sigma \rho = \sigma \rho'$ 
for all recurrent configurations $\sigma \in S(G/T)$.

\item[\rm (c)] $e\rho = e\rho'$, where $e$ is the identity element of $S(G/T)$.

\item[\rm (d)] $\sigma\rho = \sigma\rho'$ 
for all configurations $\sigma \geq e$.
\end{enumerate}
\end{lemma}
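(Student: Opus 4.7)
The plan is to prove a cycle of implications (a) $\Rightarrow$ (b) $\Rightarrow$ (c) $\Rightarrow$ (d) $\Rightarrow$ (a), leveraging the monoid action framework from \textsection\ref{subsec-monoid}--\ref{subsec-group}. The key algebraic facts I will use are: the well-defined action identity $(\sigma_1 + \sigma_2)\rho = \sigma_1(\sigma_2 \rho)$; the equality $\sigma^\circ \rho = \sigma \rho$ from Lemma~\ref{prop-monoidaction}; and the characterization of recurrence ensuring that every $\tau \in S(G/T)$ can be ``lifted'' past any given particle configuration.

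For (a) $\Rightarrow$ (b), suppose $\sigma \rho = \sigma \rho'$, and let $\tau \in S(G/T)$. By recurrence, $\tau$ is reachable from $\sigma$, so there exists $\tau' \in Q$ with $\tau = (\tau' + \sigma)^\circ$. Using Lemma~\ref{prop-monoidaction} and then associativity of the action,
\[ \tau \rho = (\tau' + \sigma)^\circ \rho = (\tau' + \sigma)\rho = \tau'(\sigma \rho) = \tau'(\sigma \rho') = (\tau' + \sigma)\rho' = \tau \rho'. \]
The implication (b) $\Rightarrow$ (c) is immediate since $e \in S(G/T)$. For (c) $\Rightarrow$ (d), given $\sigma \geq e$ pointwise on $V_0$, the difference $\sigma - e$ lies in $Q$; using the action identity,
\[ \sigma \rho = \bigl((\sigma - e) + e\bigr) \rho = (\sigma - e)(e\rho) = (\sigma - e)(e\rho') = \sigma \rho'. \]
Finally, (d) $\Rightarrow$ (a) is trivial: take $\sigma = e$, which certainly satisfies $\sigma \geq e$.

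I do not expect a genuine obstacle in this proof; the whole statement is essentially a translation of the equivalence relation ``$\rho \equiv \rho'$'' into progressively stronger-sounding forms, all linked by the associativity of the monoid action and the stabilization identity. The subtlest step is (a) $\Rightarrow$ (b), because one must recognize that recurrence is exactly the device that promotes ``works for some $\sigma$'' to ``works for every recurrent $\tau$'' by absorbing $\sigma$ into a preimage of $\tau$ under stabilization. Once that idea is in hand, (c) and (d) are merely repackagings, with (c) singling out the canonical witness $e$ and (d) observing that anything dominating $e$ decomposes as $e$ plus a nonnegative remainder.
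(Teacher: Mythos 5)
Your proposal is correct and follows essentially the same route as the paper: the paper also reduces to (a) $\Rightarrow$ (b) and (c) $\Rightarrow$ (d), proves (a) $\Rightarrow$ (b) by writing the recurrent $\tau$ as $(\tau'+\sigma)^\circ$ (phrased there via $\tau_1 = \tau'+\sigma \geq \sigma$) and invoking Lemma~\ref{prop-monoidaction} together with associativity of the action, and proves (c) $\Rightarrow$ (d) by the same decomposition $\sigma = e + (\sigma - e)$. No substantive differences.
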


\begin{proof}
It suffices to show that (a) $\Rightarrow$ (b) and (c) $\Rightarrow$ (d),
since (b) $\Rightarrow$ (c) and (d) $\Rightarrow$ (a) trivially.

\noindent
(a) $\Rightarrow$ (b):
Suppose that $\sigma \rho = \sigma \rho'$ 
for some particle configuration $\sigma$, 
and let $\tau \in S(G/T)$ be a recurrent configuration.  
Since $\tau$ is recurrent, there exists a particle configuration $\tau_1$ 
such that $\tau_1^\circ = \tau$ and $\tau_1 \geq \sigma$.  
Writing $\tau_1 = \sigma + \sigma_1$ for some $\sigma_1 \geq 0$, we obtain
	\begin{align*} \tau \rho = \tau_1^\circ \rho 
	= \tau_1 \rho &= \sigma_1 (\sigma \rho) \\
	&= \sigma_1(\sigma \rho') = \tau_1 \rho' = \tau_1^\circ \rho' 
	= \tau \rho'. 
\end{align*}

\noindent
(c) $\Rightarrow$ (d):
If $\sigma \geq e$, then writing $\sigma = e + \tau$ we have
$\sigma \rho = \tau (e \rho) = \tau (e \rho') = \sigma \rho'$.
\qedhere 
\renewcommand{\qedsymbol}{}
\end{proof}

\begin{definition} 
Rotor configurations $\rho$ and $\rho'$ are \emph{equivalent}, 
denoted $\rho \equiv \rho$, 
if the four equivalent conditions of Lemma~\ref{prop-equiv-tfae} hold.
\end{definition}

From condition (c) of Lemma~\ref{prop-equiv-tfae} 
it is immediate that $\equiv$ is an equivalence relation.  
We write the equivalence class of $\rho$ as $[\rho]$.

\begin{lemma} \label{prop-respect}
If $\rho \equiv \rho'$ then $\tau \rho \equiv \tau \rho'$
for all particle configurations $\tau$.
\end{lemma}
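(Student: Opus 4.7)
The plan is to use characterization (a) of Lemma~\ref{prop-equiv-tfae} and exploit the fact that the action of $Q$ on $R$ is a well-defined monoid action (commutativity of the operators $E_v$). In other words, I will produce a single witness particle configuration showing $\tau\rho \equiv \tau\rho'$ directly from the witness for $\rho \equiv \rho'$.

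More concretely, suppose $\rho \equiv \rho'$. By clause (a) of Lemma~\ref{prop-equiv-tfae}, there is some particle configuration $\sigma$ with $\sigma\rho = \sigma\rho'$. I claim that the same $\sigma$ witnesses $\tau\rho \equiv \tau\rho'$. Indeed, using the monoid action property $(\sigma_1+\sigma_2)\rho = \sigma_1(\sigma_2\rho)$ established in \textsection\ref{subsec-action}, together with commutativity of pointwise addition on $Q$, we have
\[
\sigma(\tau\rho) \;=\; (\sigma+\tau)\rho \;=\; (\tau+\sigma)\rho \;=\; \tau(\sigma\rho) \;=\; \tau(\sigma\rho') \;=\; (\tau+\sigma)\rho' \;=\; \sigma(\tau\rho').
\]
So there exists a particle configuration (namely $\sigma$ itself) that equalizes $\tau\rho$ and $\tau\rho'$, and condition (a) of Lemma~\ref{prop-equiv-tfae} gives $\tau\rho \equiv \tau\rho'$.

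There is no real obstacle here; the statement is essentially the assertion that the $Q$-action on $R$ descends to a well-defined $Q$-action on the quotient $R/{\equiv}$, and this follows almost formally from the fact that the equivalence relation was defined in terms of the action itself. One could alternatively use clause (c), writing $e\rho = e\rho'$ and applying $\tau$ to both sides via the monoid action to get $\tau(e\rho) = e(\tau\rho)$ and similarly for $\rho'$, but the calculation above is the cleanest route.
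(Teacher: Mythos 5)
Your proof is correct and is essentially identical to the paper's: both take the witness $\sigma$ from clause (a) of Lemma~\ref{prop-equiv-tfae} and use commutativity of the action to get $\sigma(\tau\rho) = \tau(\sigma\rho) = \tau(\sigma\rho') = \sigma(\tau\rho')$. You have merely spelled out the intermediate steps through $(\sigma+\tau)\rho$ more explicitly.
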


\begin{proof}
If $\rho \equiv \rho'$,
then there exists $\sigma$ such that $\sigma \rho = \sigma \rho'$,
which implies $\sigma (\tau \rho) = \tau(\sigma \rho) = \tau(\sigma \rho') 
= \sigma (\tau \rho')$,
which implies $\tau \rho \equiv \tau \rho'$.
\end{proof}

We say that a rotor configuration $\rho$ is \emph{reachable} from $\rho'$ 
if there exists a particle configuration $\sigma \neq \mathbf{0}$ 
such that $\rho = \sigma \rho'$.
We say that a rotor configuration $\rho$ is \emph{recurrent} 
if it is reachable from itself.  
The following lemma encapsulates the remaining results of \cite{HLMPPW} 
that we will need.

\begin{lemma}
\label{prop-recur-tfae}
The following properties of a rotor configuration $\rho$ are equivalent:
\begin{enumerate}
\item[\tfaea] $\rho$ is recurrent.
\item[\tfaeb] $\rho$ is acyclic.
\item[\tfaec] $\rho = e\rho$.
\end{enumerate}
\end{lemma}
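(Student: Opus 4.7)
My strategy is to prove the cycle of equivalences by establishing (b) $\Rightarrow$ (c), (c) $\Rightarrow$ (a), (a) $\Rightarrow$ (c), and (c) $\Rightarrow$ (b); the first two are essentially formal, while the last two require an iteration argument and an appeal to a cycle-popping result from~\cite{HLMPPW}. For (b) $\Rightarrow$ (c), if $\rho \in R_0$ then Lemma~\ref{prop-permutation} tells us that $e$ acts as the identity of the group $S(G/T)$ on $R_0$, so $e\rho = \rho$. For (c) $\Rightarrow$ (a), in the non-degenerate case where $V_0$ supports an oriented cycle (so that $e \neq \mathbf{0}$, as noted at the end of \textsection\ref{subsec-group}), the equation $\rho = e\rho$ exhibits $\rho$ as reachable from itself via the nonzero particle configuration $e$, hence recurrent.

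For (a) $\Rightarrow$ (c), suppose $\rho = \sigma\rho$ with $\sigma \neq \mathbf{0}$. Iterating the relation and applying Lemma~\ref{prop-monoidaction} gives $\rho = (n\sigma)\rho = (n\sigma)^\circ\rho$ for every $n \geq 1$. The sequence $\{(n\sigma)^\circ\}_{n\geq 1}$ lives in the finite commutative monoid $Q^\circ$ and is therefore eventually periodic; combining the recurrence of $e$ with the fact that $S(G/T)$ is an ideal of $Q^\circ$, one argues that for $n$ sufficiently large $\mu := (n\sigma)^\circ$ lies in $S(G/T)$. Since $e$ is the identity of that group, $e\cdot\mu = \mu$, and therefore
\[ e\rho \;=\; e(\mu\rho) \;=\; (e\cdot\mu)\rho \;=\; \mu\rho \;=\; \rho. \]

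The main obstacle is the remaining implication (c) $\Rightarrow$ (b), which requires showing that $\rho = e\rho$ forces $\rho$ to be acyclic --- equivalently, that the map $\rho \mapsto e\rho$ sends all of $R$ into $R_0$. I would obtain this by citing the corresponding result in \cite{HLMPPW}, whose proof is a genuine cycle-popping / Wilson-style argument on the card stacks and does not follow by bare monoid manipulation from the results already available in this section. Granted that input, $\rho = e\rho$ places $\rho$ in the image $e(R) \subseteq R_0$, completing the proof.
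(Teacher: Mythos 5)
Your decomposition is workable in outline, and three of your four implications are sound: (b)\,$\Rightarrow$\,(c) via Lemma~\ref{prop-permutation} and (c)\,$\Rightarrow$\,(a) via idempotence of $e$ are exactly the paper's arguments, and outsourcing (c)\,$\Rightarrow$\,(b) to \cite{HLMPPW} is comparable in kind to what the paper itself does. The genuine gap is in (a)\,$\Rightarrow$\,(c), at the step asserting that $\mu:=(n\sigma)^\circ$ lies in $S(G/T)$ for $n$ large. That claim is true, but it does not follow from the two facts you invoke. The ideal property only says that $(\tau+\nu)^\circ$ is recurrent when $\nu$ already is; since $n\sigma=n\sigma+\mathbf{0}$ and $\mathbf{0}$ is not recurrent, it tells you nothing about $(n\sigma)^\circ$. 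Eventual periodicity in the finite monoid $Q^\circ$ only yields an idempotent $f$ in the cyclic subsemigroup generated by $\sigma^\circ$, and $Q^\circ$ has idempotents other than $e$ (for instance $\mathbf{0}$ itself), so $\rho=f\rho$ does not give $\rho=e\rho$: one checks that $ef=e$, so multiplying through by $e$ returns only the tautology $e\rho=e\rho$. To close the gap you would need a genuine toppling argument --- that stabilizing $n\sigma$ with $\sigma\neq\mathbf{0}$ and $n$ large forces every vertex of $V_0$ to topple (using strong connectivity), plus the standard fact that such a stabilization lands in $S(G/T)$ --- and neither ingredient is among the results quoted in \textsection\ref{sec-prelim}.

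The paper sidesteps this entirely by citing \cite[Lemma~3.15]{HLMPPW} for the equivalence (a)\,$\Leftrightarrow$\,(b) and then supplying only the two easy implications (b)\,$\Rightarrow$\,(c) and (c)\,$\Rightarrow$\,(a), both of which you already have. So either fill in the ``every vertex topples'' argument above, or restructure as the paper does, replacing your (a)\,$\Rightarrow$\,(c) and (c)\,$\Rightarrow$\,(b) by a single appeal to the acyclic--recurrent equivalence. (Your caveat that (c)\,$\Rightarrow$\,(a) needs $e\neq\mathbf{0}$ is fair; the paper's own proof makes the same implicit assumption, so that is not the issue.)
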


\begin{proof}
The equivalence of (a) and (b) is Lemma 3.15 of \cite{HLMPPW}.  
The implication (b)$\Rightarrow$(c) follows from the well-definedness 
of the action of $S(G/T)$ on acyclic rotor configurations 
(Lemma~\ref{prop-permutation}).
To see that (c)$\Rightarrow$(a), note that $e \rho = (ee) \rho = e(e\rho)$ 
is reachable from itself, hence recurrent.
\end{proof}

See \cite[Lemma~3.16]{HLMPPW} for several other conditions 
equivalent to being recurrent.

\begin{lemma} \label{prop-unique}
Each equivalence class of rotor configurations
contains exactly one that is recurrent.  
The unique recurrent configuration equivalent to $\rho$ is $e\rho$, 
where $e$ is the identity element of $S(G/T)$.
\end{lemma}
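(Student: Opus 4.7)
The plan is to derive the lemma as a direct combination of Lemma~\ref{prop-equiv-tfae}(c), which says that two rotor configurations are equivalent if and only if $e$ sends them to the same configuration, with Lemma~\ref{prop-recur-tfae}, which identifies the recurrent configurations as precisely the fixed points of $e$. The strategy is: exhibit $e\rho$ as an equivalent recurrent configuration, then use the identification in Lemma~\ref{prop-recur-tfae}(c) to force any other recurrent representative to coincide with it.

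First I would verify that $e\rho$ is recurrent. Since $e$ is the identity element of the sandpile group $S(G/T)$, the idempotent relation $e + e = e$ holds in $S(G/T)$, so by associativity of the monoid action we have $e(e\rho) = (e + e)\rho = e\rho$. By the implication (c)$\Rightarrow$(a) in Lemma~\ref{prop-recur-tfae}, this fixed-point identity means $e\rho$ is recurrent.

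Next I would check that $e\rho \equiv \rho$. By the characterization in Lemma~\ref{prop-equiv-tfae}(c), equivalence amounts to the assertion $e(e\rho) = e\rho$, which is exactly the calculation from the previous step. For uniqueness, suppose $\rho'$ is any recurrent configuration with $\rho' \equiv \rho$. The implication (a)$\Rightarrow$(c) of Lemma~\ref{prop-recur-tfae} gives $\rho' = e\rho'$, while Lemma~\ref{prop-equiv-tfae}(c) applied to $\rho' \equiv \rho$ gives $e\rho' = e\rho$; combining these yields $\rho' = e\rho$.

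There is not really a hard step here: all the heavy lifting has been done in the two preceding lemmas, so the only thing to guard against is a circularity in the argument. In particular, I want to make sure that the idempotence $e(e\rho) = e\rho$ is invoked as a consequence of $e$ being a group identity in $S(G/T)$ (combined with the monoid action of Lemma~\ref{prop-monoidaction}), rather than being derived from Lemma~\ref{prop-recur-tfae}(c) itself, whose (c)$\Rightarrow$(a) direction we are simultaneously using.
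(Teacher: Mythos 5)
Your proposal is correct and follows essentially the same route as the paper: both establish $e(e\rho)=e\rho$ from idempotence of $e$ (via the monoid action), deduce that $e\rho$ is a recurrent representative of $[\rho]$, and then use the characterization of recurrence as $\rho'=e\rho'$ together with $e\rho'=e\rho$ to force uniqueness. Your extra care in citing the specific implications of Lemmas~\ref{prop-equiv-tfae} and~\ref{prop-recur-tfae}, and in noting that idempotence comes from the group structure rather than from Lemma~\ref{prop-recur-tfae} itself, is sound and matches the paper's logic.
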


\begin{proof}
Let $\rho$ be any rotor configuration.  Then
	\[ e(e\rho) = (e^2)\rho = e \rho. \]
Hence $e \rho$ is recurrent by Lemma~\ref{prop-recur-tfae} 
and $e \rho \equiv \rho$.
So each equivalence class contains at least one recurrent configuration.

For the reverse direction, suppose that $\rho$ and $\rho'$ are both recurrent 
and that $\rho \equiv \rho'$.  
By Lemma~\ref{prop-recur-tfae} we have 
$\rho  = e \rho = e \rho' = \rho'$.
\end{proof}

As a consequence of Lemmas~\ref{prop-permutation},
\ref{prop-respect}, and~\ref{prop-unique}, we have

\begin{cor} \label{prop-permutation-too}
Each addition operator $E_{\sigma}$ acts as a permutation
on the set $R/\!\!\equiv$ of equivalence classes of rotor configurations.  
Thus the action of the sandpile monoid $Q^\circ$ on rotor configurations 
projects 
to an action
	\[ S(G/T) \times R/\!\!\equiv \ \to \ R/\!\!\equiv \]
of the sandpile group $S(G/T)$ on 
equivalence classes of rotor configurations.  
\end{cor}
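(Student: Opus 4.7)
The plan is to exploit the fact that, by Lemmas~\ref{prop-unique} and~\ref{prop-recur-tfae}, each equivalence class $[\rho]$ contains a unique acyclic representative, namely $e\rho$. This yields a canonical bijection
\[ \Phi : R/\!\!\equiv \ \longrightarrow \ R_0, \qquad [\rho] \mapsto e\rho, \]
whose inverse sends an acyclic rotor configuration to its own equivalence class. With $\Phi$ in hand, the corollary reduces to transporting the known permutation action of $E_\sigma$ on $R_0$ (Lemma~\ref{prop-permutation}) across $\Phi$.

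First I would invoke Lemma~\ref{prop-respect} to note that $E_\sigma$ descends to a well-defined map on $R/\!\!\equiv$ sending $[\rho]$ to $[\sigma\rho]$. Then I would verify that $\Phi$ intertwines this descended map with the action of $E_\sigma$ on $R_0$: applying the monoid action identity $(\tau_1 \tau_2)\rho = \tau_1(\tau_2 \rho)$ together with commutativity of the sandpile monoid $Q^\circ$ yields
\[ \Phi\bigl(E_\sigma[\rho]\bigr) \;=\; e(\sigma\rho) \;=\; (e\sigma)\rho \;=\; (\sigma e)\rho \;=\; \sigma(e\rho) \;=\; E_\sigma\,\Phi([\rho]). \]
Since $E_\sigma$ permutes $R_0$ by Lemma~\ref{prop-permutation} and $\Phi$ is a bijection, it follows that $E_\sigma$ permutes $R/\!\!\equiv$. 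Restricting the resulting $Q^\circ$-action on $R/\!\!\equiv$ to the ideal $S(G/T) \subset Q^\circ$ then produces the claimed action; this is a genuine group action, and not merely a monoid action, because $S(G/T)$ is itself a group under the sandpile monoid operation.

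I do not foresee any substantive obstacle: all the content sits in the preceding lemmas, and the corollary amounts to packaging Lemmas~\ref{prop-respect}, \ref{prop-unique}, \ref{prop-recur-tfae}, and~\ref{prop-permutation} into a single statement via the bijection $\Phi$. The only step requiring any computation is the equivariance of $\Phi$, which in the end is just the commutativity of the sandpile monoid.
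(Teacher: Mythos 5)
Your proposal is correct and follows essentially the same route as the paper, which derives the corollary from exactly the lemmas you cite: Lemma~\ref{prop-respect} for well-definedness on classes, Lemma~\ref{prop-unique} for the bijection $[\rho]\mapsto e\rho$ with $R_0$, and Lemma~\ref{prop-permutation} to transport the permutation property across that bijection. Your explicit equivariance computation $e(\sigma\rho)=\sigma(e\rho)$ is a nice way of making the transport precise, and the implicit point that $e$ acts as the identity on $R/\!\!\equiv$ (so that the restriction to $S(G/T)$ is a genuine group action) is exactly $e\rho\equiv\rho$ from Lemma~\ref{prop-unique}.
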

 
\subsection{Cycle pushing} \label{subsec-push}
 
Lemma~\ref{prop-unique} gives one way to compute 
the unique recurrent rotor configuration equivalent to $\rho$:
first compute the identity element $e$ of the sandpile group of $G/T$, 
then add $e(v)$ particles at each vertex $v \in V_0$ and stabilize.  
Note however that this is rather inefficient;
for instance, in the case where $\rho$ is already acyclic,
a smart algorithm would recognize this fact
and simply output $\rho$ directly.
We now describe a more efficient way to compute $e \rho$.  
The idea is to convert $\rho$ into an acyclic configuration 
by successively removing cycles in the rotors.  
We call this process \emph{complete cycle pushing}.

\begin{figure}[htbp]
\centering
\includegraphics[height=.4\textheight]{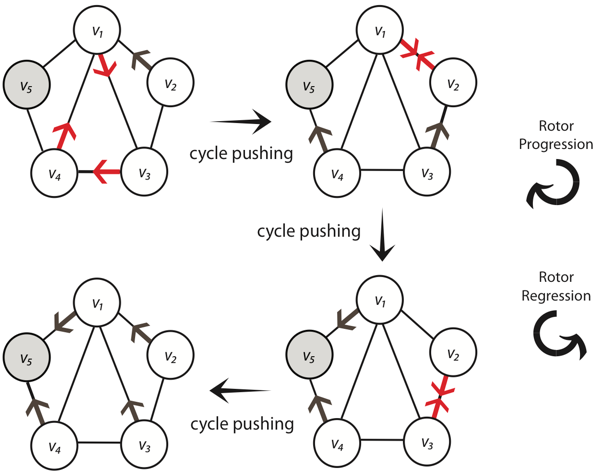}
\caption{Example of complete cycle pushing, 
starting from an arbitrary rotor configuration (upper left) 
to obtain an acyclic rotor configuration (lower left).
Here the target set is $T=\{v_5\}$.  
At each step, each rotor participating in a cycle (drawn in red) 
is regressed counterclockwise, until there are no more cycles.
}
\label{fig:cyclepushing}
\end{figure}

Cycle pushing is a key idea in Wilson's work on random stacks \cite{Wil96}
and in recent work on fast simulation of rotor-routing \cite{FL10}.
Suppose the rotor configuration $\rho$ contains a cycle
$\mathcal{C}$ with vertices $v_0,v_1,v_2,\dots,v_r=v_0$;
that is, for all $0 \leq j \leq r-1$,
the arc $\rho(v_j)$ points from $v_j$ to $v_{j+1}$.

The rotor configuration $\mathcal{C}\rho$ obtained 
by \emph{pushing} $\mathcal{C}$ is given by
	\[ \mathcal{C} \rho (v) = \begin{cases} 
	\rho(v)^- & \mbox{ if } v \in \mathcal{C}, \\
	\rho(v) & \mbox{ otherwise.} \end{cases} \]
In other words, for each $j=0,1,\ldots,r-1$ the rotor $\rho(v_j)$ is regressed,
and the other rotors remain unchanged.

Suppose we have a sequence of rotor configurations
$\rho_0, \rho_1, \dots, \rho_m$ where for each $i < m$ the configuration
$\rho_{i+1}$ is obtained from $\rho_{i}$
by pushing a cycle $\mathcal{C}_i$ in $\rho_{i}$,
and suppose moreover that $\rho_m$ is acyclic.
We say that $\rho_m$ is obtained from $\rho_0$ by {\em complete cycle pushing}.

Complete cycle pushing involves a choice of ordering 
in which to push the cycles $\mathcal{C}_i$.  
Wilson~\cite{Wil96} showed that these choices do not affect the outcome: 
if $\rho, \rho'$ are acyclic configurations 
that can be obtained from $\rho_0$ by complete cycle pushing, 
then $\rho = \rho'$.  
We will not use the uniqueness in our proofs: 
in fact, Lemma~\ref{prop-final} below gives another proof of Wilson's result.

Figure~\ref{fig:cyclepushing}
shows how a rotor configuration
is affected by cycle pushing.
The shaded vertex $v_5$ is the target vertex,
and the cycles that are pushed
(first the 3-cycle $v_1 \rightarrow v_3 \rightarrow v_4 \rightarrow v_1$,
then the 2-cycle $v_1 \rightarrow v_2 \rightarrow v_1$,
and then the 2-cycle $v_2 \rightarrow v_3 \rightarrow v_2$)
are shown in red.

\begin{lemma} \label{prop-push}
If $\rho'$ is obtained from $\rho$ by cycle pushing,
then $\rho' \equiv \rho$.
\end{lemma}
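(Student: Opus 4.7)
The plan is to verify condition (a) of Lemma~\ref{prop-equiv-tfae} directly by exhibiting a particle configuration $\sigma$ with $\sigma\rho = \sigma\rho'$. The natural choice is $\sigma$ equal to one particle at each vertex $v_0,v_1,\ldots,v_{r-1}$ of the pushed cycle $\mathcal{C}$ (and zero elsewhere). The guiding intuition is that a single particle placed at each cycle vertex, released into $\rho'$, is exactly what is needed to ``undo'' the regression: each such particle advances its rotor by one and then hops one step around $\mathcal{C}$, restoring the cycle rotors to their values in $\rho$ while returning the particle configuration to $\sigma$.

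First I would set up the firing sequence carefully using the stack-based formalism of Section~\ref{sec-prelim}. Starting from $(\sigma,\rho')$, fire each $v_j$ exactly once, for $j=0,1,\ldots,r-1$. Since $\rho'(v_j) = \rho(v_j)^-$, the card that the pointer passes when $v_j$ is fired is $(\rho(v_j)^-)^+ = \rho(v_j)$; hence the rotor at $v_j$ is updated to $\rho(v_j)$ and the particle moves along the arc $\rho(v_j)$ from $v_j$ to $v_{j+1}$. After performing all $r$ firings (in any order; they are independent since each vertex fires only once), the rotor configuration on $\mathcal{C}$ has become $\rho$, and the particles, having each slid one step around $\mathcal{C}$, are again distributed as one per vertex of $\mathcal{C}$, i.e., as $\sigma$. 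Rotors outside $\mathcal{C}$ are untouched, and they agree in $\rho$ and $\rho'$ to begin with.

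Now I would invoke the abelian property (Lemma~\ref{prop-abelian}). Continuing the rotor walk from this intermediate state $(\sigma,\rho)$ until all particles reach $T$ produces, by definition, $\sigma\rho$. On the other hand, because the final rotor configuration obtained from $(\sigma,\rho')$ is independent of the firing order, it must equal $\sigma\rho'$. Therefore $\sigma\rho = \sigma\rho'$, which by Lemma~\ref{prop-equiv-tfae}(a) gives $\rho \equiv \rho'$.

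The only delicate point is confirming that after the $r$ ``cycle-restoring'' firings, the intermediate particle configuration really is $\sigma$ again — this is just the observation that $j \mapsto j+1 \pmod r$ permutes the cycle vertices — and that firing each cycle vertex once is a legitimate partial execution that can be extended by the abelian property. No real obstacle is anticipated; the argument is a one-shot application of the abelian property once the correct witness $\sigma$ is identified.
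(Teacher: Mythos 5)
Your proposal is correct and is essentially identical to the paper's proof: the same witness $\sigma = 1_{\mathcal{C}}$, the same observation that firing each cycle vertex once advances every cycle rotor from $\rho'(v_j)$ back to $\rho(v_j)$ while the particles permute around the cycle and restore $\sigma$, and the same appeal to the abelian property (Lemma~\ref{prop-abelian}) to conclude $\sigma\rho' = \sigma\rho$ and hence $\rho' \equiv \rho$ via condition (a) of Lemma~\ref{prop-equiv-tfae}. No gaps.
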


\begin{proof}
Let $\rho' = \mathcal{C} \rho$, and let $\sigma = 1_\mathcal{C}$ 
be the particle configuration consisting of 
one particle at each vertex $v_j$ of the cycle.
We claim that $\sigma \rho' = \sigma \rho$.
Starting from $\rho'$, let each particle take a single step of rotor walk: 
for each $j$, the particle at $v_j$ moves to $v_{j+1}$ (taking indices mod $r$),
and the rotor at $v_j$ progresses to $\rho'(v_j)^+ = \rho(v_j)$.  
Since each vertex $v_j$ on the cycle sends one particle to $v_{j+1}$ 
and receives one particle from $v_{j-1}$, 
the resulting particle configuration is still $\sigma$; 
on the other hand, the rotor configuration has changed from $\rho'$ to $\rho$.
By the abelian property
we conclude that $\sigma \rho' = \sigma \rho$, 
and hence $\rho' \equiv \rho$.
\end{proof}

\begin{lemma} \label{prop-terminate}
For any initial rotor configuration, 
any sequence of cycle pushing moves 
yields an acyclic configuration in finitely many steps.
\end{lemma}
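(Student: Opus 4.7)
The plan is to argue by contradiction: suppose an infinite sequence $\rho_0, \rho_1, \rho_2, \ldots$ of rotor configurations is produced by successive cycle pushes $\rho_{i+1} = \mathcal{C}_i \rho_i$. By Lemma~\ref{prop-push}, all the $\rho_i$ belong to the single equivalence class $[\rho_0]$, which sits inside the finite set $R$ of rotor configurations. So by pigeonhole some configuration $\rho^\ast$ recurs: $\rho_{i_0} = \rho_{i_1} = \rho^\ast$ for some indices $i_0 < i_1$.

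Next, I would extract a ``firing loop'' from this repetition. The proof of Lemma~\ref{prop-push} is the crucial input: starting from $(1_{\mathcal{C}}, \mathcal{C}\rho)$, one round of rotor-walk firings around $\mathcal{C}$ returns to $(1_{\mathcal{C}}, \rho)$ with no particle reaching a target (note that cycles in $\rho$ lie entirely in $V_0$, since target vertices have no outgoing rotor arcs). Setting $\sigma = \sum_{i=i_0}^{i_1-1} 1_{\mathcal{C}_i}$, I can concatenate these per-cycle reversals in the reverse order $\mathcal{C}_{i_1-1}, \mathcal{C}_{i_1-2}, \ldots, \mathcal{C}_{i_0}$ to produce a single firing sequence of length $L = \sum_{i=i_0}^{i_1-1}|\mathcal{C}_i| \geq 1$ that starts at $(\sigma, \rho^\ast)$ and returns to $(\sigma, \rho^\ast)$, during which no particle ever exits $V_0$.

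The final step is to derive a contradiction with the abelian property (Lemma~\ref{prop-abelian}). From $(\sigma, \rho^\ast)$, choose any firing sequence $\mathcal{F}$ that routes every particle to $T$; by the strong form of the abelian property, the odometer $u: V_0 \to \N$ counting firings at each vertex is independent of the order, so $\mathcal{F}$ has a fixed length $m = \sum_{v \in V_0} u(v)$. Prepending our loop to $\mathcal{F}$ yields another valid complete firing sequence from $(\sigma, \rho^\ast)$, this time of length $L + m$, since after the loop we are back at $(\sigma, \rho^\ast)$ and may continue with $\mathcal{F}$ verbatim. Invariance of the odometer forces $L + m = m$, i.e., $L = 0$, contradicting $L \geq 1$.

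The main obstacle, as I see it, is making sure the ``reversal'' step is unambiguous: I need Lemma~\ref{prop-push} in the strengthened form that the reversing firings can be concatenated across different cycles, so that the composite firing sequence from $(\sigma, \rho_{i_1})$ really ends at $(\sigma, \rho_{i_0})$ in the same particle configuration. Beyond that, the only subtlety is invoking the odometer-invariance refinement of Lemma~\ref{prop-abelian}, which is standard (see, e.g., \cite[Lemma~3.9]{HLMPPW}) and is what turns a firing loop into an outright contradiction.
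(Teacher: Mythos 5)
Your argument is correct, but it takes a genuinely different route from the paper. The paper's proof is elementary and uses no algebra at all: since the stack at $w$ is periodic with period $d(w)$, a vertex $w$ having an arc into $T$ can be regressed at most $d(w)-1$ times before its rotor points into $T$, after which it can never again lie on a pushable cycle; strong connectivity then propagates this ``freezing'' inward from $T$, so every vertex participates in only finitely many pushes. You instead combine pigeonhole on the finite set $R$ with the observation, extracted from the proof of Lemma~\ref{prop-push}, that each cycle push is undone by one round of legal firings that preserves the particle configuration and never sends a particle to $T$; a repeated configuration would then yield a nonempty firing loop at $(\sigma,\rho^\ast)$, and prepending it to a complete firing sequence contradicts invariance of the total number of firings. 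Your concatenation step is sound (idle particles do not affect a firing, each round needs only $1_{\mathcal{C}_i}\leq\sigma$ particles, and the loop stays inside $V_0$ because a pushable cycle consists of rotor arcs between vertices of $V_0$, targets having no rotors). The one point to flag is that your final step needs the odometer-invariance form of the abelian property, which is strictly stronger than Lemma~\ref{prop-abelian} as stated in this paper (that statement only asserts invariance of the hit counts $N(t)$ and of the final rotor configuration); you correctly locate the stronger form in \cite[Lemma~3.9]{HLMPPW}, so this is a matter of citation rather than substance. What your approach buys is a conceptual explanation of termination as a shadow of strong convergence; what it gives up is the paper's effective, purely local bound on how many times each vertex can be pushed, and its independence from the abelian machinery.
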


\begin{proof}
Recall that target vertices do not have rotors.
Hence if a vertex $w$ has an arc to a target vertex $t$,
then $w$ can participate in only a finite number of cycle pushing moves,
because at some point the rotor at $w$ would point to $t$,
and thereafter $w$ cannot belong to a pushable cycle.
Thereafter, each vertex $v$ that has an arc to $w$
can participate in only a finite number of cycle pushing moves,
because at some point the rotor at $v$ would point to $w$,
and thereafter $v$ cannot belong to a pushable cycle.
Continuing in this fashion,
and using the strong connectedness of $G$, we see that 
every vertex can participate in only finitely many cycle pushing moves.
\end{proof}

\begin{lemma} \label{prop-final}
Let $\rho$ be a rotor configuration.
Any sequence of cycle pushing moves that starts from $\rho$
must terminate with $e \rho$,
the unique acyclic rotor configuration equivalent to $\rho$.
\end{lemma}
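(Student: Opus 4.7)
The plan is to observe that Lemma~\ref{prop-final} follows almost immediately by combining the preceding lemmas; no new combinatorial construction is required. Given a sequence of cycle pushing moves starting from $\rho$, let $\rho = \rho_0, \rho_1, \ldots, \rho_m$ denote the intermediate configurations. I would first invoke Lemma~\ref{prop-terminate} to guarantee that such a sequence is finite and that the final configuration $\rho_m$ is acyclic (no cycle remains to be pushed).

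Next I would argue, by induction on $i$, that $\rho_i \equiv \rho$ for all $i \leq m$. The base case $i=0$ is trivial. For the inductive step, $\rho_{i+1}$ is obtained from $\rho_i$ by pushing a cycle, so Lemma~\ref{prop-push} gives $\rho_{i+1} \equiv \rho_i$, and transitivity of $\equiv$ (immediate from condition~(c) of Lemma~\ref{prop-equiv-tfae}) yields $\rho_{i+1} \equiv \rho$. In particular $\rho_m \equiv \rho$.

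Finally, since $\rho_m$ is acyclic, Lemma~\ref{prop-recur-tfae} tells us $\rho_m$ is recurrent. By Lemma~\ref{prop-unique}, each equivalence class of rotor configurations contains a \emph{unique} recurrent (equivalently, acyclic) element, and this element is $e\rho$ where $e$ is the identity of $S(G/T)$. Since $\rho_m$ is a recurrent element of $[\rho]$, we conclude $\rho_m = e\rho$.

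There is no real obstacle here — the work has all been done in the earlier lemmas. The only step requiring a modicum of care is the inductive use of transitivity of $\equiv$, which is why Lemma~\ref{prop-equiv-tfae}(c) (characterizing equivalence via the single element $e\rho$) is so convenient: it makes transitivity, symmetry, and reflexivity transparent and lets the cycle-pushing moves chain together without bookkeeping. As a byproduct, we recover Wilson's uniqueness statement: the output of complete cycle pushing is independent of the order in which cycles are pushed, since every such output equals $e\rho$.
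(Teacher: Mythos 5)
Your proof is correct and follows essentially the same route as the paper: Lemma~\ref{prop-terminate} for termination in an acyclic configuration, Lemma~\ref{prop-push} (chained by transitivity) for $\rho_m\equiv\rho$, and Lemmas~\ref{prop-recur-tfae} and~\ref{prop-unique} to identify $\rho_m$ with $e\rho$. The paper compresses the induction into a single invocation of Lemma~\ref{prop-push}, but the content is identical.
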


\begin{proof}
By Lemma~\ref{prop-terminate}, 
any sequence of cycle pushing moves starting from $\rho$ 
must terminate in an acyclic configuration $\rho'$.  
By Lemma~\ref{prop-push} we have $\rho' \equiv \rho$.  
Since $\rho'$ is acyclic, $\rho'$ is recurrent by Lemma~\ref{prop-recur-tfae},
and hence $\rho' = e \rho$ by Lemma~\ref{prop-unique}.
\end{proof}

\begin{figure}[htbp]
\centering
\includegraphics[height=.4\textheight]{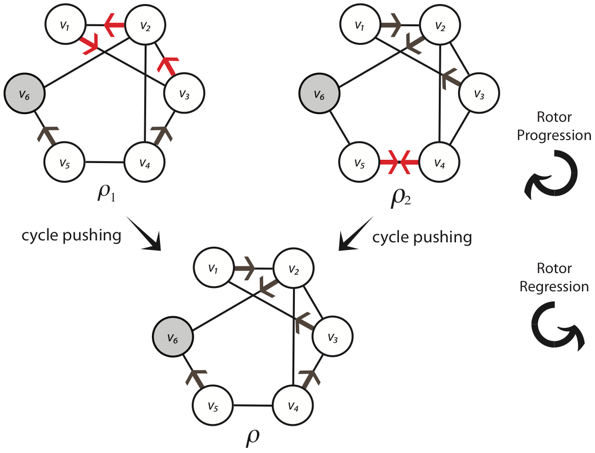}
\caption{The rotor configurations $\rho_1$ at top left and $\rho_2$ 
at top right yield the same acyclic configuration $\rho$ after 
complete cycle pushing, so they are equivalent by Lemma~\ref{prop-equivalent}.}
\label{fig:equivalentconfigs}
\end{figure}

The next lemma shows that equivalence of rotor configurations 
is the reflexive-symmetric-transitive closure of 
the relation $\rho \sim \mathcal{C} \rho$ given by cycle pushing.

\begin{lemma} \label{prop-equivalent}
$\rho_1 \equiv \rho_2$ if and only if there exists a rotor configuration that is
accessible from both $\rho_1$ and $\rho_2$
by a sequence of cycle pushing moves.
\end{lemma}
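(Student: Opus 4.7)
The plan is to package the preceding lemmas: the reverse direction follows from the observation that a single cycle push produces an equivalent configuration, and the forward direction follows from the fact that complete cycle pushing always terminates at a canonical representative of the equivalence class.

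For the direction ($\Leftarrow$), suppose $\rho_1$ and $\rho_2$ both admit sequences of cycle pushing moves leading to a common configuration $\rho$. Lemma~\ref{prop-push} states that any single cycle push yields an equivalent configuration, so by induction on the number of cycle pushing moves and the fact that $\equiv$ is an equivalence relation, we get $\rho_1 \equiv \rho$ and $\rho_2 \equiv \rho$. Transitivity then gives $\rho_1 \equiv \rho_2$.

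For the direction ($\Rightarrow$), suppose $\rho_1 \equiv \rho_2$. The natural candidate for a common target is the unique acyclic representative of their shared equivalence class. Starting from $\rho_1$, apply cycle pushing moves repeatedly; Lemma~\ref{prop-terminate} guarantees that this process terminates in finitely many steps at some acyclic configuration, and Lemma~\ref{prop-final} identifies this terminal configuration as $e \rho_1$. Similarly, any sequence of cycle pushing moves from $\rho_2$ terminates at $e \rho_2$. But since $\rho_1 \equiv \rho_2$, condition (c) of Lemma~\ref{prop-equiv-tfae} gives $e \rho_1 = e \rho_2$, so this common acyclic configuration is simultaneously reachable from both $\rho_1$ and $\rho_2$ by cycle pushing.

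There is no real obstacle here, since all the machinery has been built up in the preceding subsection; the lemma is essentially a restatement of what Lemmas~\ref{prop-push}, \ref{prop-terminate}, \ref{prop-final}, and \ref{prop-equiv-tfae} collectively say. The only mild care needed is to be explicit that in the forward direction one can always \emph{choose} to perform complete cycle pushing (even if $\rho_1$ or $\rho_2$ is already acyclic, the empty sequence of moves counts), so the common reachable configuration is well-defined.
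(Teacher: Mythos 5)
Your proof is correct and takes essentially the same route as the paper: the reverse direction follows from Lemma~\ref{prop-push} and transitivity of $\equiv$, and the forward direction exhibits $e\rho_1 = e\rho_2$ as the common configuration accessible from both via Lemma~\ref{prop-final}. Your extra remarks about termination and the empty sequence of moves are already absorbed into Lemma~\ref{prop-final} as stated, so nothing further is needed.
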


\begin{proof}
If $\rho_1 \equiv \rho_2$
then $e \rho_1 = e \rho_2$
is accessible from both $\rho_1$ and $\rho_2$ by Lemma~\ref{prop-final}.
Conversely, 
if $\rho'$ is a configuration accessible from both $\rho_1$ and $\rho_2$, 
then $\rho_1 \equiv \rho' \equiv \rho_2$ by Lemma~\ref{prop-push}.
\end{proof}

For a pictorial example, 
see Figure~\ref{fig:equivalentconfigs}.  The rotor configuration $\rho$ at the bottom is acyclic,
and the other two rotor configurations
lead to $\rho$ after a single cycle pushing move
(in one case, 
the 3-cycle $v_1 \rightarrow v_3 \rightarrow v_2 \rightarrow v_1$ is pushed, 
and in the other case, 
the 2-cycle $v_4 \rightarrow v_5 \rightarrow v_4$ is pushed).
As in Figure~\ref{fig:cyclepushing}, 
rotors progress by turning clockwise
and regress by turning counterclockwise.
Lemma~\ref{prop-equivalent} tells us
that the two non-acyclic rotor configurations $\rho_1$ and $\rho_2$
must be equivalent,
and indeed the reader can check that condition (a) 
of Lemma~\ref{prop-equiv-tfae} is satisfied
if one takes $\sigma$ to be the particle configuration
with a single particle at $v_4$;
that is, if we add a single particle at $v_4$ and let it perform rotor walk until reaching the target vertex $v_6$, then the two rotor configurations become the same.

Denote by $t_v(\rho)$ the target vertex reached by a particle started at $v$ if the initial rotor configuration is $\rho$.

\begin{lemma} \label{prop-same-target}
If $\rho_1 \equiv \rho_2$, then $t_v(\rho_1) = t_v(\rho_2)$ for all $v \in V_0$.
\end{lemma}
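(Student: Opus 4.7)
The plan is to reduce to a single cycle push via the earlier lemmas and then analyze the walk directly. First, Lemma~\ref{prop-unique} gives $e\rho_1 = e\rho_2$; since these are literally the same rotor configuration, the walk of a single particle from $v$ produces the same trajectory in both cases and so reaches the same target, giving $t_v(e\rho_1)=t_v(e\rho_2)$. It therefore suffices to prove $t_v(\rho) = t_v(e\rho)$ for an arbitrary $\rho$. By Lemma~\ref{prop-final}, $e\rho$ is obtained from $\rho$ by complete cycle pushing, so by induction on the number of pushes it is enough to prove the core claim
\[ t_v(\rho) = t_v(\mathcal{C}\rho) \qquad \text{whenever } \mathcal{C} \text{ is a cycle in } \rho. \]

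To prove this, I would run the walks from $v$ under $\rho$ and under $\rho' := \mathcal{C}\rho$ in parallel. The two configurations agree outside the cycle vertices $v_0,\ldots,v_{r-1}$, so the walks coincide at every step that takes place at a non-cycle vertex; in particular they agree up to the first step that reaches some cycle vertex $v_j$ (taking $j$ so that $v=v_j$ at step zero in the case $v\in\mathcal{C}$). At that first visit the $\rho'$-walk sees $\rho'(v_j) = \rho(v_j)^-$, so its next arc is $\rho(v_j)^{-+} = \rho(v_j)$, which points to $v_{j+1}$. The same reasoning applies at each subsequent cycle vertex $v_k$: each is being visited for the first time, with rotor $\rho(v_k)^-$, and so sends the particle along the cycle arc to $v_{k+1}$. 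Thus the $\rho'$-walk traces the entire cycle $v_j\to v_{j+1}\to\cdots\to v_j$, and in doing so advances each cycle rotor by exactly one step, bringing the rotors on the cycle into agreement with $\rho$.

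Upon the walk's return to $v_j$, the non-cycle rotors have been altered identically by the two walks up to this point, while the cycle rotors now equal $\rho(v_k)$; this is exactly the rotor configuration seen by the $\rho$-walk at its own first arrival at $v_j$. From this moment on the two walks proceed in lockstep, and in particular they terminate at the same target, which establishes the core claim. The main delicate point is justifying that each cycle vertex really is being visited for the first time during the cycle traversal in the $\rho'$-walk: this relies on the minimality of $v_j$ as the first cycle vertex reached by the walk, which ensures that no step before the cycle traversal has touched any other cycle vertex. The rest of the argument is routine step-by-step bookkeeping, and combining the core claim with the first paragraph yields the lemma.
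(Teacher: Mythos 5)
Your proposal is correct and follows essentially the same route as the paper: reduce to a single cycle push (the paper invokes Lemma~\ref{prop-equivalent} directly rather than routing through $e\rho$ via Lemmas~\ref{prop-unique} and~\ref{prop-final}, but this is the same reduction), then observe that the pushed walk traverses the cycle once upon first reaching it, after which its position and rotor configuration match those of the unpushed walk at its first arrival on the cycle, so the two walks coincide thereafter. The ``delicate point'' you flag about first visits is handled implicitly the same way in the paper's proof.
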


\begin{proof}
By Lemma~\ref{prop-equivalent} it suffices to consider the case where 
$\rho_2$ is obtained from $\rho_1$ by pushing
a cycle $v_0,v_1,\dots,v_r=v_0$.
If the particle added to $\rho_1$ at $v$ never hits the cycle,
then the particle added to $\rho_2$ at $v$ will traverse the exact same path,
arriving at the same target.
On the other hand, 
suppose the particle added to $\rho_1$ at $v$ hits the cycle, 
say at $v_0$.
Then the particle added to $\rho_2$ at $v$ will take the same walk to $v_0$
and will then traverse the cycle, arriving back at $v_0$.
At this point the rotor configuration will be the same
as the rotor configuration for the first situation 
(i.e., starting from $\rho_1$)
when the particle first hits $v_0$.
Thereafter, the two processes evolve identically,
since in both situations the particle is at $v_0$
and the rotor configurations at this stage
are the same in both evolutions.
In particular, the particle will end up at the same target vertex.
\end{proof}

\subsection{Proof of the periodicity theorem} \label{subsec-periodic}

We can now prove our first main result, that the hitting sequence 
associated with a (periodic) rotor mechanism is periodic.

\begin{proof}[Proof of Theorem~\ref{thm-periodic}] 
Let $t_1,t_2,\ldots$ be the hitting sequence 
for initial rotor configuration $\rho_0$, 
and for $n \geq 1$ let $\rho_n$ be 
the rotor configuration after $n$ particles released from the source vertex $s$
have hit the targets $t_1,\dots,t_n$ (staying put after each hit). 
Then $\rho_{n} = E_s \rho_{n-1}$ for $n \geq 1$.
Let $[\rho_n]$ denote the equivalence class of $\rho_n$.
Recall that $E_s$ acts as a permutation on equivalence classes
(Corollary~\ref{prop-permutation-too}),
so the sequence $[\rho_0], [\rho_1], [\rho_2], \dots$ is periodic,
say with period $D$.
Then by Lemma~\ref{prop-same-target}, since
$[\rho_{n+D}] = [\rho_n]$ for all $n \geq 0$, we conclude that $t_{n+D} = t_{n}$ for all $n \geq 1$, which shows that the hitting sequence is periodic with period~$D$.
\end{proof}

Next we identify the period $D$ of the sequence $[\rho_0], [\rho_1], \ldots$ 
arising in the proof of Theorem~\ref{thm-periodic}.  
This in turn gives an upper bound on the period 
of the hitting sequence $t_1,t_2,\ldots$, 
namely, the latter period is a divisor of $D$.  
Denote by $\delta_s$ the particle configuration 
consisting of $1$ particle at the source vertex, 
and let $g_s = (\delta_s + e)^\circ$ be 
the corresponding recurrent configuration.  

\begin{lemma}
\label{prop-theperiod}
Let $D$ be the order of $g_s$ in the sandpile group $S(G/T)$. 
The sequence of equivalence classes 
of rotor configurations $\{[\rho_n]\}_{n \geq 0}$ has period $D$.  
Moreover, the hitting sequence satisfies $t_{n+D} = t_n$ for all $n\geq 1$. 
\end{lemma}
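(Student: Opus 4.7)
The plan is to identify the action of $E_s^n$ on the equivalence class $[\rho_0]$ with multiplication by $g_s^n$ in the sandpile group, and then exploit freeness of the $S(G/T)$-action to pin down the period exactly.

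First I would note that $\rho_n = (n\delta_s)\rho_0$, and translate this to equivalence classes. Two simplifications are available on $R/\!\equiv$: by Lemma~\ref{prop-monoidaction}, $\sigma$ and $\sigma^\circ$ act identically, and by Lemma~\ref{prop-unique} together with Lemma~\ref{prop-respect}, adding $e$ acts trivially (since $e\rho \equiv \rho$ implies $(\sigma+e)\rho = \sigma(e\rho) \equiv \sigma\rho$). Combining these, for any $n \geq 0$ the action of $n\delta_s$ on $[\rho_0]$ coincides with the action of $(n\delta_s + e)^\circ$. A short induction using $(e+e)^\circ = e$ (since $e$ is idempotent in the group) and the standard identity $(\sigma_1^\circ + \sigma_2)^\circ = (\sigma_1 + \sigma_2)^\circ$ gives
\[ g_s^n = (n\delta_s + e)^\circ \qquad \text{for all } n \geq 0, \]
where the left side is the $n$-th power in the sandpile group. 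Hence $[\rho_n] = g_s^n [\rho_0]$.

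Next, since $g_s^D = e$ and $e$ acts trivially on equivalence classes, we conclude $[\rho_{n+D}] = [\rho_n]$ for all $n \geq 0$, so the period of $\{[\rho_n]\}$ divides $D$. Conversely, suppose the period is $D'$, so $g_s^{D'}[\rho_0] = [\rho_0]$. Here I would invoke the fact cited in \textsection\ref{subsec-group} that the $S(G/T)$-action on acyclic rotor configurations is free and transitive; via the bijection $[\rho] \leftrightarrow e\rho$ from Lemma~\ref{prop-unique}, this transfers to a free action on $R/\!\equiv$. Thus $g_s^{D'}$ must equal $e$, forcing $D \mid D'$, and we obtain $D' = D$.

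Finally, for the hitting sequence, each $t_n$ equals $t_s(\rho_{n-1})$ by construction, so the equality $[\rho_{n-1+D}] = [\rho_{n-1}]$ combined with Lemma~\ref{prop-same-target} immediately yields $t_{n+D} = t_n$ for all $n \geq 1$. The main obstacle, such as it is, is the bookkeeping identifying $g_s^n$ with $(n\delta_s + e)^\circ$ and cleanly passing between the monoid action on $R$ and the induced group action on $R/\!\equiv$; the freeness input from \textsection\ref{subsec-group} is the key ingredient that upgrades the divisibility bound into an equality.
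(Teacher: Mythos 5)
Your proposal is correct and follows essentially the same route as the paper: identify the action of $E_s$ on equivalence classes with multiplication by $g_s$ (via $g_s\rho = (\delta_s+e)^\circ\rho = E_s(e\rho) \equiv E_s\rho$), deduce $[\rho_{n+D}]=[\rho_n]$ from $g_s^D=e$, use freeness of the $S(G/T)$-action to rule out any smaller period, and conclude $t_{n+D}=t_n$ from Lemma~\ref{prop-same-target}. The only cosmetic difference is that you make explicit the identity $g_s^n=(n\delta_s+e)^\circ$ and the transfer of freeness from $R_0$ to $R/\!\equiv$, both of which the paper leaves implicit.
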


\begin{proof}
For any rotor configuration $\rho$, 
since $\rho \equiv e\rho$ we have by Lemma~\ref{prop-respect}
	\[ g_s \rho = (\delta_s + e)^\circ \rho = (\delta_s + e)\rho = E_s(e\rho) \equiv E_s \rho. \]
Since $g_s^D = e$, we obtain 
	\[ E_s^D \rho \equiv g_s^D \rho = e \rho \equiv \rho \]
which shows that $\rho_{n+D} \equiv \rho_n$ for all $n\geq 0$.  
Conversely, if $\rho_{n+k} \equiv \rho_n$ for some $n\geq 0$ and $k \geq 1$, 
then $g_s^k \rho_n \equiv \rho_n$, which implies $g_s^k = e$ 
since the action of $S(G/T)$ on equivalence classes of rotor configurations 
is free; hence $k$ must be divisible by $D$.

The fact that $t_{n+D} = t_n$ for all $n \geq 1$ 
follows from Lemma~\ref{prop-same-target}.
\end{proof}

\section{Time reversal and antiparticles} \label{sec-reversal}

\subsection{Stack flipping} \label{subsec-flip}

Recall the stacks picture introduced in \textsection\ref{sec-prelim}.
Each vertex $v \in V_0$ has a stack $\rho_v$, 
which is a bi-infinite sequence of arcs
\[ \rho_v = [ \dots, e_{-2}, e_{-1}, e_{0} | e_{1}, e_{2}, e_{3}, \dots ]. \]
(We abuse notation slightly by using the same letter ($\rho$) 
to denote a stack configuration and its corresponding rotor configuration.)
The $e_{i}$ with $i \leq 0$ constitute the ``past'' of the stack,
the $e_{i}$ with $i > 0$ constitute the ``future'' of the stack,
$e_{0}$ is the retrospective state of the stack,
and $e_{1}$ is the prospective state of the stack;
the pointer ``$|$'' marks the divide between past and future.
When a particle at $v$ takes a step, the pointer shifts to the right,
so that the stack at $v$ becomes
\[ [ \dots, e_{-2}, e_{-1}, e_{0}, e_{1} | e_{2}, e_{3}, \dots ] \]
and the particle travels along arc $e_{1}$.

Shifting the pointer at $v$ to the right
corresponds to progressing the rotor at $v$, 
or in stack language, popping the stack at $v$;
correspondingly, shifting the pointer at $v$ to the left
will be called regressing the rotor or {\em pushing} the stack at $v$.
When we perform cycle pushing,
the pointer for the vertex $v$ moves one place to the left
for all vertices $v$ belonging to the cycle.


We define {\it stack flipping\/} as the operation on a bi-infinite stack
that exchanges past and future, turning
\[ [ \dots, e_{-2}, e_{-1}, e_{0} | e_{1}, e_{2}, e_{3}, \dots ] \]
into
\[ [ \dots, e_{3}, e_{2}, e_{1} | e_{0}, e_{-1}, e_{-2}, \dots ] . \]
Given a stack configuration $\rho = (\rho_v)_{v \in V_0}$, let $\Phi(\rho)$ denote the stack configuration obtained by flipping all its stacks.  Note that $\Phi(\Phi(\rho)) = \rho$.

\begin{lemma} \label{prop-flip-reverse}
Let $\rho$ be a rotor configuration that has a cycle $\mathcal{C}$.  
Then $\mathcal{C}$ is also a cycle of $\Phi(\mathcal{C} \rho)$, and 
	\[ \Phi(\mathcal{C} (\Phi(\mathcal{C}\rho))) = \rho. \]
\end{lemma}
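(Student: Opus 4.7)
The approach is to work entirely in the bi-infinite stack picture and track, for each vertex, how the pointer position and stack contents transform under each of the four operations $\mathcal{C}, \Phi, \mathcal{C}, \Phi$. The identity $\Phi\circ\Phi=\mathrm{id}$ on each individual stack is immediate from the definition of stack flipping, so the whole thing reduces to a bookkeeping argument at the vertices of $\mathcal{C}$.

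For part (1), let $v_j\in\mathcal{C}$ and write the stack of $\rho$ at $v_j$ as
\[ [\dots,e_{-2},e_{-1},e_0\mid e_1,e_2,\dots], \]
so that $e_0=\rho(v_j)$ is the arc from $v_j$ to $v_{j+1}$. Cycle pushing $\mathcal{C}$ shifts the pointer at $v_j$ one step to the left, yielding
\[ [\dots,e_{-2},e_{-1}\mid e_0,e_1,e_2,\dots]; \]
applying $\Phi$ then exchanges past and future (relative to the new pointer), giving
\[ [\dots,e_2,e_1,e_0\mid e_{-1},e_{-2},\dots]. \]
The retrospective state of this stack is $e_0=\rho(v_j)$, so $(\Phi(\mathcal{C}\rho))(v_j)$ is the arc from $v_j$ to $v_{j+1}$. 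This holds for every $j$, so $\mathcal{C}$ is still a cycle of $\Phi(\mathcal{C}\rho)$.

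For part (2), I handle cycle and non-cycle vertices separately. At a vertex $v\notin\mathcal{C}$, the two $\mathcal{C}$ operations do nothing (and $v$ is not on $\mathcal{C}$ no matter which rotor configuration is current), so the composition reduces to $\Phi\circ\Phi=\mathrm{id}$ on the stack at $v$. At a vertex $v_j\in\mathcal{C}$, I simply continue the calculation above: applying $\mathcal{C}$ to the stack $[\dots,e_2,e_1,e_0\mid e_{-1},e_{-2},\dots]$ shifts the pointer left once more to give $[\dots,e_2,e_1\mid e_0,e_{-1},\dots]$, and a final application of $\Phi$ returns
\[ [\dots,e_{-2},e_{-1},e_0\mid e_1,e_2,\dots], \]
which is the original stack at $v_j$ in $\rho$.

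There is no real obstacle here; the only subtle point to execute carefully is that $\Phi$ reverses each stack \emph{relative to the current pointer position}, so the left-shift of the pointer performed by $\mathcal{C}$ on the flipped configuration is exactly the one needed to cancel the original left-shift after a second flip. With that observation in place, the four-step trajectory of each cycle stack is just push, flip, push, flip, which returns it to the start.
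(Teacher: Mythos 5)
Your proof is correct and follows essentially the same route as the paper's: both track the stack at each cycle vertex through the four operations push, flip, push, flip, observe that the retrospective state after the first flip is again $\rho(v_j)$ (so $\mathcal{C}$ survives), and dispose of non-cycle vertices via $\Phi\circ\Phi=\mathrm{id}$. No differences worth noting.
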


\begin{proof}
Let $v$ be a vertex of $\mathcal{C}$.  Let $\rho' = \mathcal{C} \rho$, 
and write the rotor stack at $v$ as
\[ \rho_v = [\dots, e_{-2}, e_{-1}, e_{0} | e_{1}, e_{2}, e_{3}, \dots]. \]
If we push the cycle, the stack at $v$ becomes
\[ (\mathcal{C} \rho)_v = 
[\dots, e_{-2}, e_{-1} | e_{0}, e_{1}, e_{2}, e_{3}, \dots]. \]
If we then flip all the stacks, we obtain
\[ (\Phi(\mathcal{C} \rho))_v = 
[\dots, e_{3}, e_{2}, e_{1}, e_{0} | e_{-1}, e_{-2}, \dots]. \]
The retrospective rotors at the vertices $v \in \mathcal{C}$
are now as they were initially in $\rho$, 
so they form the same cycle $\mathcal{C}$.  Pushing that cycle yields
\[ (\mathcal{C} ( \Phi(\mathcal{C} \rho)))_v = 
[\dots, e_{3}, e_{2}, e_{1} | e_{0}, e_{-1}, e_{-2}, \dots]. \]
Finally, flipping the stacks once more brings us to
\[  (\Phi(\mathcal{C} ( \Phi(\mathcal{C} \rho))))_v = 
[\dots, e_{-2}, e_{-1}, e_{0} | e_{1}, e_{2}, e_{3}, \dots] \]
which equals $\rho_v$.

Meanwhile, for those vertices $v$ that are not part of the cycle $\mathcal{C}$,
the stack at $v$ is simply reversed twice
(with no intervening cycle pushing moves to complicate things),
so this stack ends up in exactly the same configuration
as it was in $\rho$.
\end{proof}

Diagrammatically, writing $\rho' = \mathcal{C}\rho$, we have
\newarrow{Bothways}{<}{-}{-}{-}{>}
\begin{diagram}
\rho & \rTo^{\mathcal{C}} & \rho' \\
\dBothways < \Phi & & \dBothways > \Phi \\
\Phi(\rho) & \lTo^{\mathcal{C}} & \Phi(\rho')
\end{diagram}
Note the reversal of the direction of the $\mathcal{C}$ arrow.

\begin{lemma} \label{prop-flip-equivalent}
If $\rho \equiv \rho'$, then $\Phi(\rho) \equiv \Phi(\rho')$.
\end{lemma}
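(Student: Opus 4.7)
The plan is to reduce the statement to a combination of Lemma~\ref{prop-equivalent} (equivalence is exactly the transitive closure of one-step cycle pushing), Lemma~\ref{prop-push} (a single cycle push preserves equivalence), and Lemma~\ref{prop-flip-reverse} (the behavior of $\Phi$ under a single cycle push). Once these are in hand, the proof is purely formal.

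First I would invoke Lemma~\ref{prop-equivalent}: given $\rho \equiv \rho'$, fix a rotor configuration $\rho^{*}$ that is reachable from both $\rho$ and $\rho'$ by cycle pushing, say via sequences
\[ \rho = \rho_0 \xrightarrow{\mathcal{C}_1} \rho_1 \xrightarrow{\mathcal{C}_2} \cdots \xrightarrow{\mathcal{C}_m} \rho_m = \rho^{*}, \qquad \rho' = \rho'_0 \xrightarrow{\mathcal{C}'_1} \cdots \xrightarrow{\mathcal{C}'_n} \rho'_n = \rho^{*}. \]
The key observation, coming straight from Lemma~\ref{prop-flip-reverse}, is that each step $\rho_{i-1} \xrightarrow{\mathcal{C}_i} \rho_i$ induces a \emph{reversed} cycle-pushing step $\Phi(\rho_i) \xrightarrow{\mathcal{C}_i} \Phi(\rho_{i-1})$: the cycle $\mathcal{C}_i$ is present in $\Phi(\rho_i)$, and pushing it yields $\Phi(\rho_{i-1})$. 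Running this reversal through the whole chain gives a cycle-pushing sequence from $\Phi(\rho^{*})$ down to $\Phi(\rho)$, and likewise one from $\Phi(\rho^{*})$ to $\Phi(\rho')$.

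Next I would chain Lemma~\ref{prop-push} along these two reversed sequences. That lemma guarantees $\tau \equiv \mathcal{C}\tau$ for every single cycle push, so by induction on the length of the sequence we get $\Phi(\rho^{*}) \equiv \Phi(\rho)$ and $\Phi(\rho^{*}) \equiv \Phi(\rho')$. Transitivity of $\equiv$ (immediate from condition (c) of Lemma~\ref{prop-equiv-tfae}) then yields $\Phi(\rho) \equiv \Phi(\rho')$, which is the conclusion.

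The only genuinely subtle point — and what I would highlight as the main thing to get right — is the direction flip in Lemma~\ref{prop-flip-reverse}: cycle pushing goes from $\rho$ to $\mathcal{C}\rho$, but under $\Phi$ it translates to a push going from $\Phi(\mathcal{C}\rho)$ to $\Phi(\rho)$. If one tried to argue via Lemma~\ref{prop-equivalent} that $\Phi(\rho)$ and $\Phi(\rho')$ share a \emph{forward} common descendant, one would get stuck; the cleaner route is to exhibit $\Phi(\rho^{*})$ as a common \emph{ancestor} of $\Phi(\rho)$ and $\Phi(\rho')$ under cycle pushing, and then use that cycle pushing preserves $\equiv$ regardless of direction. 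After that, no further obstacle remains.
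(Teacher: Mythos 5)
Your proof is correct and follows essentially the same route as the paper, which also reduces the claim to Lemma~\ref{prop-equivalent} plus the fact that Lemma~\ref{prop-flip-reverse} turns each cycle push into a (reversed) cycle push on the flipped configurations. Your explicit handling of the direction reversal --- exhibiting $\Phi(\rho^{*})$ as a common \emph{ancestor} and then invoking Lemma~\ref{prop-push}, which preserves $\equiv$ in either direction --- is exactly the point the paper's terser argument relies on implicitly via the symmetric closure.
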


\begin{proof}
By Lemma~\ref{prop-equivalent}, 
it suffices to show that if two stack configurations 
are related by a cycle pushing move,
then their flips are related by a cycle pushing move.
But that is precisely what Lemma~\ref{prop-flip-reverse} tells us.
\end{proof}

\subsection{Antiparticles} \label{subsec-antiparticle}

Next we introduce antiparticles.  Like particles, they move from vertex to vertex in the graph,
but they interact with the stacks in a different way.
Suppose that the current stack configuration at $v$ is
\[ [ \dots, e_{-2}, e_{-1}, e_{0} | e_{1}, e_{2}, e_{3}, \dots ] \]
and that there is an antiparticle at $v$.
An antiparticle step consists of first
moving the particle along the arc $e_{0}$
and then pushing the stack at $v$ to obtain
\[ [ \dots, e_{-2}, e_{-1} | e_{0}, e_{1}, e_{2}, e_{3}, \dots ] . \]
(Compare: a particle step consists of first
popping the stack at $v$ to obtain 
\[ [ \dots, e_{-2}, e_{-1}, e_{0}, e_{1} | e_{2}, e_{3}, \dots ] \]
and then moving the particle along the arc $e_{1}$.)

\begin{lemma} \label{prop-antiparticle}
If $\rho'$ is obtained from $\rho$ 
by moving a particle from $v$ along arc $e$, 
then $\Phi(\rho')$ is obtained from $\Phi(\rho)$ 
by moving an antiparticle from $v$ along arc $e$.
\end{lemma}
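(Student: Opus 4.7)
The plan is to verify the statement by an explicit bookkeeping of the stack at $v$, since the effect on every other vertex's stack is trivial: neither a particle step nor an antiparticle step at $v$ alters the stack at any $w \neq v$, and $\Phi$ acts componentwise, so outside of $v$ there is nothing to check.

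For the vertex $v$, I would fix notation by writing
\[ \rho_v = [\dots, e_{-2}, e_{-1}, e_0 \,|\, e_1, e_2, e_3, \dots], \]
so that $e_0$ is the retrospective and $e_1$ the prospective arc. By the definition of a particle step at $v$, the pointer advances past $e_1$ and the particle travels along $e := e_1$, giving
\[ \rho'_v = [\dots, e_{-2}, e_{-1}, e_0, e_1 \,|\, e_2, e_3, \dots]. \]
Applying $\Phi$ to both sides, I get
\[ \Phi(\rho)_v = [\dots, e_3, e_2, e_1 \,|\, e_0, e_{-1}, e_{-2}, \dots], \qquad \Phi(\rho')_v = [\dots, e_3, e_2 \,|\, e_1, e_0, e_{-1}, e_{-2}, \dots]. \]

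Now I would read off from $\Phi(\rho)_v$ that its retrospective arc is exactly $e_1 = e$. By the definition of an antiparticle step, moving an antiparticle from $v$ in configuration $\Phi(\rho)$ first sends the antiparticle along this retrospective arc $e$ and then pushes the pointer one step to the left. Carrying out this push transforms $\Phi(\rho)_v$ into
\[ [\dots, e_3, e_2 \,|\, e_1, e_0, e_{-1}, e_{-2}, \dots], \]
which is precisely $\Phi(\rho')_v$. Combined with the observation that the stacks at other vertices agree trivially, this gives the claim.

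There is no real obstacle here: the lemma is essentially the assertion that stack-flipping intertwines "pop-then-move" with "move-then-push", and the verification is a one-line comparison of the pointer positions before and after the flip. The only thing to be careful about is that the arc $e$ along which the particle moved in $\rho \to \rho'$ is identified (as an arc of $G$, not as a symbol in the stack) with the retrospective arc in $\Phi(\rho)_v$; writing out the indices $e_0, e_1$ as above makes this automatic.
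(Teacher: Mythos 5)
Your proof is correct and is essentially the same as the paper's: both write out the stack at $v$ before and after the particle step, flip both, and observe that the flip of $\rho_v$ has retrospective arc $e_1=e$, so the antiparticle's ``move along the retrospective arc, then push'' lands exactly on $\Phi(\rho')_v$. The only addition is your (correct, if unstated in the paper) remark that stacks at vertices other than $v$ are untouched.
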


\begin{proof}
Write the stack at $v$ for the rotor configuration $\rho$ as
\[ \rho_v = [\dots, e_{-2}, e_{-1}, e_{0} | e_{1}, e_{2}, e_{3}, \dots]. \]
When a particle at $v$ advances by one step, 
the particle moves along the arc $e_{1}$
and the stack at $v$ becomes
\[ \rho'_v = [\dots, e_{-2}, e_{-1}, e_{0}, e_{1} | e_{2}, e_{3}, \dots]. \]
On the other hand,
the stack at $v$ for the flipped rotor configuration $\Phi(\rho)$ is
\[ \Phi(\rho)_v = 
[\dots, e_{3}, e_{2}, e_{1} | e_{0}, e_{-1}, e_{-2}, \dots]. \]
When an antiparticle at $v$ advances by one step, 
the antiparticle moves along the arc $e_{1}$
and the stack at $v$ becomes
	\[ [\dots, e_{3}, e_{2} | e_{1}, e_{0}, e_{-1}, e_{-2}, \dots ] \]
which equals $\Phi(\rho')_v$.
\end{proof}

Just as one defines particle addition operators $E_v^{\ }$,
one can define antiparticle addition operators $E_v^{-}$ 
on rotor configurations: to apply $E_v^{-}$,
add an antiparticle at $v$ and let it perform rotor walk on $G$
(using the antiparticle dynamics described above)
until it arrives at a vertex in the target set $T$.
To highlight the symmetry between particles and antiparticles
we will sometimes write $E_v^{+}$ instead of $E_v^{\ }$
for particle addition operators.  Note that in general, $E_v^+$ and $E_w^-$ do not commute.

Write $t^+_v(\rho)$ (resp.\ $t^-_v(\rho)$) for the target vertex hit by a particle (resp.\ antiparticle) started at $v$ if the initial rotor configuration is $\rho$.

\old{
Example: Let $G$ be the graph with vertex set $\{0,1,2,3\}$
with targets 0 and 3,
with a rotor at 1 that alternates between 0 and 2
and a rotor at 2 that alternates between 1 and 3.
Adopting retrospective notation, we will write the four rotor states 
as $* 0 1 *$, $* 0 3 *$, $* 2 1 *$, and $* 2 3 *$,
where for instance $* 2 3 *$ denotes the stack configuration
in which the retrospective rotor states at 1 and 2
are 2 and 3, respectively.
(The $*$'s are meant to remind us 
of the presence of the target vertices 0 and 3 at the ends.)
The following table shows how 
the two particle addition operators $E_1^{+}$, $E_2^{+}$
and the two antiparticle addition operators $E_1^{-}$, $E_2^{-}$
act on the four rotor configurations:

\begin{center}
\begin{tabular}{l | l l l l}
        & $*01*$ & $*03*$ & $*21*$ & $*23*$ \\
\hline
$E_1^+$ & $*23*$ & $*01*$ & $*01*$ & $*03*$ \\
$E_2^+$ & $*03*$ & $*23*$ & $*23*$ & $*01*$ \\
$E_1^-$ & $*21*$ & $*23*$ & $*23*$ & $*01*$ \\
$E_2^-$ & $*23*$ & $*01*$ & $*01*$ & $*21*$
\end{tabular}
\end{center}

\noindent
Note that the particle addition operators 
and the antiparticle addition operators
do not commute with one another in this example.
}

\begin{lemma} \label{prop-switch}
For any rotor configuration $\rho$ and any $v \in V_0$ 
we have $\Phi(E_v^{+}(\rho)) = E_v^{-}(\Phi(\rho))$, and $t_v^+ (\rho) = t_v^-(\Phi(\rho))$.
\end{lemma}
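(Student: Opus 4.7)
The plan is to iterate the single-step correspondence already established in Lemma~\ref{prop-antiparticle}. That lemma says that one particle step from $v$ starting in $\rho$ and one antiparticle step from $v$ starting in $\Phi(\rho)$ traverse the same arc $e$ and produce stack configurations that remain related by $\Phi$. So particle and antiparticle walks that begin at the same vertex in $\Phi$-related configurations should evolve in lockstep.

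More precisely, I would let $v = v_0, v_1, v_2, \dots$ be the sequence of vertices visited by the particle released at $v$ under $\rho$, and let $\rho = \rho^{(0)}, \rho^{(1)}, \rho^{(2)}, \dots$ be the corresponding sequence of rotor configurations. By definition of $E_v^+$, there is some smallest $n$ with $v_n \in T$, at which point $\rho^{(n)} = E_v^+(\rho)$ and $v_n = t_v^+(\rho)$. I would then prove by induction on $k$ that the antiparticle released at $v$ under $\Phi(\rho)$ is, after $k$ steps, located at $v_k$ with the ambient stack configuration equal to $\Phi(\rho^{(k)})$. The base case $k=0$ is immediate. For the inductive step, the particle step from $\rho^{(k)}$ moves from $v_k$ along some arc $e^{(k)}$ to $v_{k+1}$, producing $\rho^{(k+1)}$; by Lemma~\ref{prop-antiparticle} (applied to $\rho^{(k)}$), the antiparticle step from $\Phi(\rho^{(k)})$ at $v_k$ moves along the same arc $e^{(k)}$ to $v_{k+1}$ and produces $\Phi(\rho^{(k+1)})$, as desired.

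Since the particle and antiparticle traverse exactly the same vertex sequence $v_0, v_1, \dots$, the antiparticle also first hits $T$ at step $n$, at the vertex $v_n$. This simultaneously gives $t_v^-(\Phi(\rho)) = v_n = t_v^+(\rho)$ and $E_v^-(\Phi(\rho)) = \Phi(\rho^{(n)}) = \Phi(E_v^+(\rho))$. (In particular, the existence of $E_v^-(\Phi(\rho))$---i.e.\ the termination of the antiparticle walk---is automatic from the termination of the particle walk via the lockstep correspondence.)

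The only mild subtlety is correctly bookkeeping which arc is used at each step under each dynamics: the particle at $v_k$ uses the prospective arc of $\rho^{(k)}_{v_k}$, whereas the antiparticle at $v_k$ uses the retrospective arc of $\Phi(\rho^{(k)})_{v_k}$, and these two arcs coincide by the very definition of $\Phi$. This is exactly the content of Lemma~\ref{prop-antiparticle}, so no additional work is needed; the proof is essentially a one-line induction on top of that lemma.
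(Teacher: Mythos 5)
Your proof is correct and takes essentially the same approach as the paper, which likewise establishes the lemma by repeated application of Lemma~\ref{prop-antiparticle} to conclude that the particle in $\rho$ and the antiparticle in $\Phi(\rho)$ traverse the same vertex sequence. Your version merely makes the step-by-step induction explicit.
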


\begin{proof}
This follows by repeated application
of Lemma~\ref{prop-antiparticle}:
the sequence of vertices traveled by the particle added to $\rho$ at $v$
is the same as the sequence of vertices 
traveled by the antiparticle added to $\Phi(\rho)$ at $v$.
\end{proof}

Diagrammatically, writing $\rho' = E_v^+ \rho$, we have:
\begin{diagram}
\rho & \rTo^{E_v^+} & \rho' \\
\dBothways < \Phi & & \dBothways > \Phi \\
\Phi(\rho) & \rTo^{E_v^-} & \Phi(\rho')
\end{diagram}

\begin{lemma}
\label{prop-anti-equivalent}
If $\rho \equiv \rho'$, then $E_v^- \rho \equiv E_v^- \rho'$ and $t_v^-(\rho) = t_v^-(\rho')$ for all $v \in V_0$.
\end{lemma}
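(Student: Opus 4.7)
The plan is to deduce the antiparticle statement from the particle statement by conjugating with the involution $\Phi$. The key observation is that Lemma~\ref{prop-switch} gives the intertwining relation $\Phi \circ E_v^{+} = E_v^{-} \circ \Phi$ (together with the analogous identity for hitting targets), so any property of particle addition that is preserved under flipping yields the corresponding property for antiparticle addition.

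First I would assume $\rho \equiv \rho'$ and apply Lemma~\ref{prop-flip-equivalent} to conclude $\Phi(\rho) \equiv \Phi(\rho')$. Next I would invoke Lemma~\ref{prop-respect} (applied to the particle configuration $\tau = \delta_v$, the single particle at $v$) to deduce $E_v^{+}\Phi(\rho) \equiv E_v^{+}\Phi(\rho')$. Flipping both sides and using Lemma~\ref{prop-flip-equivalent} again gives $\Phi(E_v^{+}\Phi(\rho)) \equiv \Phi(E_v^{+}\Phi(\rho'))$. Finally, by Lemma~\ref{prop-switch} applied with $\rho$ replaced by $\Phi(\rho)$, together with $\Phi \circ \Phi = \mathrm{id}$, we have $\Phi(E_v^{+}\Phi(\rho)) = E_v^{-}\rho$, and similarly for $\rho'$, yielding the desired equivalence $E_v^{-}\rho \equiv E_v^{-}\rho'$.

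For the target equality, the argument is parallel: from Lemma~\ref{prop-switch} (second clause) applied with $\Phi(\rho)$ in place of $\rho$, we get $t_v^{-}(\rho) = t_v^{+}(\Phi(\rho))$, and likewise $t_v^{-}(\rho') = t_v^{+}(\Phi(\rho'))$. Since $\Phi(\rho) \equiv \Phi(\rho')$, Lemma~\ref{prop-same-target} gives $t_v^{+}(\Phi(\rho)) = t_v^{+}(\Phi(\rho'))$, and chaining these three equalities yields $t_v^{-}(\rho) = t_v^{-}(\rho')$.

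I do not anticipate a real obstacle here: all the needed machinery (preservation of equivalence under $\Phi$, preservation under adding particles, conjugation of $E_v^{\pm}$ by $\Phi$, and constancy of hit targets on equivalence classes for particles) has already been established. The only minor care needed is to use $\Phi \circ \Phi = \mathrm{id}$ correctly when substituting $\Phi(\rho)$ into Lemma~\ref{prop-switch}, so that the intertwining is applied in the right direction.
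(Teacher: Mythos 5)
Your proposal is correct and follows essentially the same route as the paper: the authors write $E_v^- = \Phi \circ E_v^+ \circ \Phi$ (from Lemma~\ref{prop-switch}) and note that each factor preserves equivalence by Lemmas~\ref{prop-flip-equivalent} and~\ref{prop-respect}, then derive the target equality via $t_v^-(\rho) = t_v^+(\Phi(\rho))$ and Lemma~\ref{prop-same-target}, exactly as you do. Your version merely unfolds the composition into explicit steps.
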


\begin{proof}
We have $E_v^- = \Phi \circ E_v^+ \circ \Phi$ by Lemma~\ref{prop-switch}.  
Moreover, $\Phi$ preserves equivalence by Lemma~\ref{prop-flip-equivalent} 
and $E_v^+$ preserves equivalence by Lemma~\ref{prop-respect}, 
so $E_v^-$ must preserve equivalence.  This proves the first statement.  For the second, since $\Phi (\rho) \equiv \Phi (\rho')$, we have by Lemmas~\ref{prop-switch} and~\ref{prop-same-target}
	\[ t_v^-(\rho) = t_v^+ ( \Phi(\rho)) = t_v^+ (\Phi(\rho')) = t_v^-(\rho'). \qed \]
\renewcommand{\qedsymbol}{}
\end{proof}

\subsection{Loop-erasure} \label{subsec-loop}

If a path $(x_0,\dots,x_r)$ in the directed graph $G$
contains a cycle, 
i.e., a sub-path $(x_p,x_{p+1},\dots,x_q)$ with $x_q=x_p$,
define the {\it first cycle\/}
as the unique cycle with $q$ as small as possible;
we may replace the path by the shorter path
$(x_0,\dots,x_{p-1},x_p,x_{q+1},\dots,x_r)$
from which the $q-p$ vertices of the first cycle have been removed.
If this new path contains a cycle, 
we may erase the first cycle of the new path,
obtaining an even shorter path.
If we continue in this fashion,
we eventually obtain a simple path from $x_0$ to $x_r$,
called the {\it loop-erasure\/} of the original path.

The notion of loop-erasure is due to Lawler~\cite{Law80}, 
who studied the loop-erasure of random walk.  
As is mentioned at the end of  \textsection5 of \cite{HLMPPW},
there is also a connection between loop-erasure and rotor walk.
Given a rotor configuration $\rho$ and a set $S \subset V_0$,
define {\it popping $S$\/} as the operation
of popping the stack at each vertex in $S$ to obtain the new rotor configuration
\[ S^+ \rho (v) = \begin{cases} 
	\rho(v)^+ & \mbox{ if } v \in S, \\
	\rho(v) & \mbox{ otherwise.} \end{cases} \]
(Compare to cycle pushing \textsection\ref{subsec-push}, 
in which the rotors are regressed instead of progressed.)
For a rotor configuration $\rho$ and a vertex $v \in V_0$, let $\Gamma$ be the loop-erasure of the path $x_0, \ldots, x_r$ traveled by a particle performing rotor walk starting from $x_0 = v$ until it hits the target set.  Let $C_1, \ldots, C_m$ be the cycles erased to obtain $\Gamma$.  For any vertex $w$, the number of $i$ ($0 \leq i \leq r-1$) with $x_i=w$ is equal to the number of $j$ ($1 \leq j \leq m$)
for which $w \in C_j$, plus either 1 or 0 according to whether or not
$w \in \Gamma$.  Hence the final rotor configuration $E_v^+ \rho$ can be obtained from $\rho$ by popping the cycles $C_1,\ldots,C_m$ and the path $\gamma := \Gamma-\{x_r\}$; that is,
	\begin{equation} \label{e.pathandcyclepopping} E_v^+ \rho = \gamma^+ C_1^+ \ldots C_m^+ \rho. \end{equation}

\begin{lemma} \label{prop-loop-erase}
For every rotor configuration $\rho$ and every $v\in V_0$ we have
$E_v^{-} E_v^{+} \rho \equiv \rho$,
and the path traversed by the antiparticle
is the loop-erasure of the path traversed by the particle.
In particular, the antiparticle hits the same target as the particle:
	\[ t_v^- (E_v^+ \rho) = t_v^+ (\rho). \]
\end{lemma}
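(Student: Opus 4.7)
The plan is to first establish the combinatorial fact that for each $0 \leq j < \ell$, the last departure from $y_j$ during the walk $\pi$ is along the arc $y_j \to y_{j+1}$. Both the antiparticle-path claim and the equivalence $E_v^- E_v^+ \rho \equiv \rho$ will follow readily.

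To prove the combinatorial fact, I would use chronological loop-erasure of $\pi$: maintain a simple path $P_i$ after processing $x_0,\dots,x_i$, extending $P_{i-1}$ by $x_i$ when $x_i \notin P_{i-1}$ and otherwise truncating $P_{i-1}$ to end at its occurrence of $x_i$, so that $P_r = \Gamma$. For each $j < \ell$, let $\theta_j$ be the last step at which the edge $y_j \to y_{j+1}$ enters the current path and is never subsequently removed. Since $P_{\theta_j - 1}$ ends in $y_j$, we have $x_{\theta_j-1} = y_j$ and $x_{\theta_j} = y_{j+1}$, so the particle traverses $y_j \to y_{j+1}$ at step $\theta_j$. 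A later visit to $y_j$ would truncate the current path at $y_j$, deleting the edge $y_j \to y_{j+1}$, contradicting the choice of $\theta_j$. Hence $\theta_j - 1$ is the final occurrence of $y_j$ in $\pi$, and the corresponding departure uses the arc $y_j \to y_{j+1}$.

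Consequently, the retrospective rotor at $y_j$ in $E_v^+\rho$ is exactly $y_j \to y_{j+1}$. Releasing an antiparticle at $v = y_0$ in $E_v^+\rho$, it moves along this retrospective arc to $y_1$ while pushing the stack at $y_0$; iterating the same reasoning at each $y_j$, the antiparticle traces $\Gamma$ and reaches the target $y_\ell$. This establishes both the loop-erasure description of the antiparticle path and the identity $t_v^-(E_v^+\rho) = y_\ell = t_v^+(\rho)$.

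For the equivalence, set $\psi := E_v^- E_v^+ \rho$ and apply Lemma~\ref{prop-equiv-tfae}(a) with $\sigma = \delta_v$: it suffices to show $E_v^+ \psi = E_v^+ \rho$. Writing $k_j$ for the number of occurrences of $y_j$ in $\pi$, the rotor at $y_j$ in $\psi$ has undergone $k_j$ pops (from the particle walk) and one push (from the antiparticle), so its prospective arc in $\psi$ is the arc used on the final visit to $y_j$ in $\pi$, namely $y_j \to y_{j+1}$. Thus a fresh particle released at $v$ in $\psi$ follows $\Gamma$, popping each $y_j$'s rotor once more; rotors at vertices off $\gamma$ are untouched by both the antiparticle and this new particle walk. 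Counting pops at every vertex then shows $E_v^+ \psi$ and $E_v^+ \rho$ agree, so $\delta_v \psi = \delta_v \rho$ and $\psi \equiv \rho$. The main obstacle is the combinatorial sub-claim about last departures; once that is secured, the rest reduces to a bookkeeping of pops and pushes exploiting the action-based equivalence.
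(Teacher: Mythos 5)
Your proof is correct, and it splits naturally into the same two halves as the paper's, but the second half takes a genuinely different route. For the first half you prove carefully, via the chronological (online) loop-erasure $P_0,P_1,\dots,P_r$, that the last departure from each $y_j\in\Gamma$ is along the arc $y_j\to y_{j+1}$; the paper asserts the same key fact (the retrospective rotors of $E_v^+\rho$ are the last-exit arcs and trace a simple path to the target) more tersely, leaning on the counting identity established just before the lemma. One small point you should flag: you define $\Gamma$ as the loop-erasure in the paper's sense (repeatedly erasing the \emph{first} cycle) but then work with the chronological version $P_r$; these coincide, but that is a standard fact worth a sentence. For the equivalence $E_v^-E_v^+\rho\equiv\rho$, the paper factors $E_v^+\rho=\gamma^+C_1^+\cdots C_m^+\rho$ into cycle-popping moves plus a path-popping move, observes that the antiparticle exactly undoes the path-popping, and then invokes the fact that cycle pushing/popping preserves equivalence (Lemmas~\ref{prop-push} and~\ref{prop-equivalent}). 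You instead verify condition (a) of Lemma~\ref{prop-equiv-tfae} directly with $\sigma=\delta_v$: writing $\psi=E_v^-E_v^+\rho$, you check by counting pops and pushes at each vertex that a fresh particle released at $v$ in $\psi$ retraces $\Gamma$ and that $E_v^+\psi=E_v^+\rho$ on the nose. This is a valid and self-contained alternative; what it buys is independence from the cycle-decomposition identity \eqref{e.pathandcyclepopping} and from the cycle-pushing lemmas, at the cost of a somewhat more delicate pointer bookkeeping, whereas the paper's version makes the connection to Wilson-style cycle popping explicit and reuses machinery already in place.
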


\begin{proof} 
After the particle has been added to $\rho$ at $v$,
changing the rotor configuration to $E_v^{+} \rho$
and arriving at target $t = t_v^+(\rho)$,
the retrospective rotor at each vertex $v$
is the arc that the particle traversed
the last time it left $v$.
Hence the rotors of $E_v^ + \rho$ give a simple (cycle-free) path $\gamma$
from $v$ to $t$,
and the antiparticle will travel this path,
arriving at the same target $t$.
By \eqref{e.pathandcyclepopping}, the rotor configuration $E_v^+ \rho$ is obtained from $\rho$ by a sequence of cycle-popping
moves followed by a ``path-popping move'' along $\gamma$.
The motion of the antiparticle from $v$ to $t$
undoes the path-popping move,
so all that survives in $E_v^- E_v^+ \rho$ are the cycle popping moves.
Since cycle popping doesn't change the equivalence class
of a rotor configuration (by Lemma~\ref{prop-equivalent}),
we conclude that $E_v^{-} E_v^{+} \rho \equiv \rho$.
\end{proof}

Likewise, for every $\rho$ we have $E_v^{+} E_v^{-} \rho \equiv \rho$.
Lemma~\ref{prop-loop-erase} thus says that the products
$E_v^+ E_v^-$ and $E_v^- E_v^+$
act as the identity operation on equivalence classes of rotor configurations.
That is, if we view $E_v^+$ and $E_v^-$ as elements of the sandpile group
(acting on equivalence classes of rotor configurations), they are inverses.

\subsection{Proof of the rotor-reversal theorem} \label{subsec-reversal}

Now we turn to the proof of our second main result, 
that reversal of the periodic pattern of the rotor mechanism at all vertices
causes reversal of the periodic pattern of the hitting sequence.  To save unnecessary notation in the proof, we write $E^{\pm} := E^{\pm}_s$ and $t^{\pm} := t^{\pm}_s$.

\begin{proof}[Proof of Theorem~\ref{thm-reversal}]

As in the proof of Theorem~\ref{thm-periodic}, 
the sequence of equivalence classes, $[\rho_0], [\rho_1], [\rho_2], \dots$ 
is periodic, say with period $D$.  
Now consider the hitting sequence for \emph{antiparticles} 
released from the source vertex $s$ from initial configuration $\rho_0$.  
Define $\eta_0 = \rho_0$ and $\eta_i = E^-(\eta_{i-1})$ for $i\geq 1$.  
We first show by induction on $i$ that 
$\eta_i \equiv \rho_{D-i}$ for all $i=0,\ldots,D$.  
The base case $i=0$ is the fact that $\rho_0 \equiv \rho_D$; 
and for $1 \leq i \leq D$, if $\eta_{i-1} \equiv \rho_{D-i+1}$ 
then by Lemmas~\ref{prop-anti-equivalent} and~\ref{prop-loop-erase},
	\[ \eta_{i} = E^-(\eta_{i-1}) \equiv  E^-(\rho_{D-i+1}) = E^- E^+ (\rho_{D-i}) \equiv \rho_{D-i} \]
which completes the inductive step.  

Now for $i \geq 1$, let $u_i^+ = t^+(\rho_{i-1})$ and $u_i^- = t^-(\eta_{i-1})$
be the hitting sequences for a particle (resp.\ antiparticle) started at $s$ 
with initial rotor configuration $\rho_0$.  
Using the second statements of Lemmas~\ref{prop-anti-equivalent} 
and~\ref{prop-loop-erase}, we have for $i=0,\ldots,D-1$
	\[ u^-_{i+1} = t^-(\eta_{i}) =  t^-(\rho_{D-i}) = t^-(E^+ \rho_{D-i-1})  = t^+(\rho_{D-i-1}) = u^+_{D-i}. \]
By Lemma~\ref{prop-switch}, the hitting sequence for a particle 
starting at $s$ with rotor configuration $\Phi(\rho_0)$ 
equals the hitting sequence for an \emph{antiparticle} 
starting at $s$ with rotor configuration $\rho_0$, 
that is, the sequence $\{u_i^-\}_{i\geq 1}$.  
Moreover, since $\eta_D = \rho_0 \equiv \rho_D = \eta_0$, 
the sequence $\{u^-_i\}_{i \geq 1}$ satisfies 
$u^-_{i+D} = u^-_{i}$ for all $i \geq 1$ by Lemma~\ref{prop-anti-equivalent}.  
Hence the particle hitting sequences for $\Phi(\rho_0)$ and $\rho_0$ 
are both periodic modulo $D$, 
and reversing the first $D$ terms of the latter hitting sequence 
yields the first $D$ terms of the former.
\end{proof}


\section*{Acknowledgments}

We thank Peter Winkler for launching this investigation
with his suggestion that the palindromic period-4 rotor
1,2,2,1,\dots would have special properties worthy of study.

\end{document}